\def\@tocline#1#2#3#4#5#6#7{\relax
	\ifnum #1>\c@tocdepth 
	\else
	\par \addpenalty\@secpenalty\addvspace{#2}%
	\begingroup \hyphenpenalty\@M
	\@ifempty{#4}{%
		\@tempdima\csname r@tocindent\number#1\endcsname\relax
	}{%
		\@tempdima#4\relax
	}%
	\parindent\z@ \leftskip#3\relax \advance\leftskip\@tempdima\relax
	\rightskip\@pnumwidth plus4em \parfillskip-\@pnumwidth
	#5\leavevmode\hskip-\@tempdima
	\ifcase #1
	\or\or \hskip 1em \or \hskip 2em \else \hskip 3em \fi%
	#6\nobreak\relax
	\dotfill\hbox to\@pnumwidth{\@tocpagenum{#7}}\par
	\nobreak
	\endgroup
	\fi}
\def\l@subsection{\@tocline{2}{0pt}{2pc}{5pc}{}}
\numberwithin{equation}{section}
\theoremstyle{plain}
\newtheorem{thm}{Theorem}[section]
\newtheorem*{thm*}{Theorem}
\newtheorem{mydef}[thm]{Definition}
\newtheorem{lem}[thm]{Lemma}
\newtheorem*{lem*}{Lemma}
\newtheorem*{prop*}{Proposition}
\theoremstyle{remark}
\newtheorem{rem}[thm]{Remark}
\newcommand{\R}{\mathbb{R}}
\newcommand{\C}{\mathbb{C}}
\newcommand{\N}{\mathbb{N}}
\newcommand{\T}{\mathbb{T}}
\newcommand{\Leray}{\mathcal{P}}
\newcommand{\mi}{m_i}
\newcommand{\Mi}{M_i}
\newcommand{\mf}{m_f}
\newcommand{\Mf}{M_f}
\newcommand{\veps}{\varepsilon}
\begin{document}
	
	
	\title[On the mass transfer in the 3D Pitaevskii model]{On the mass transfer in the 3D Pitaevskii model}

	\author[Jang]{Juhi Jang}
	\address[Jang]{\newline
		Department of Mathematics \\ University of Southern California \\ Los Angeles, CA 90089, USA}
	\email[]{\href{juhijang@usc.edu}{juhijang@usc.edu}}
	
	\author[Jayanti]{Pranava Chaitanya Jayanti}
	\address[Jayanti]{\newline
		Department of Mathematics \\ University of Southern California \\ Los Angeles, CA 90089, USA}
	\email[]{\href{pjayanti@usc.edu}{pjayanti@usc.edu}}
	
	\author[Kukavica]{Igor Kukavica}
	\address[Kukavica]{\newline
		Department of Mathematics \\ University of Southern California \\ Los Angeles, CA 90089, USA}
	\email[]{\href{kukavica@usc.edu}{kukavica@usc.edu}}

	\date{\today}
	
	
	\keywords{Superfluids; Pitaevskii model; Navier-Stokes equation; Nonlinear Schr\"odinger equation; Global weak solutions; Existence; 3D}
	
	
	\maketitle
	
	\begin{abstract}
		We examine a micro-scale model of superfluidity derived by Pitaevskii \cite{Pitaevskii1959PhenomenologicalPoint} which describes the interacting dynamics between superfluid He-4 and its normal fluid phase. This system consists of the nonlinear Schr\"odinger equation and the incompressible, inhomogeneous Navier-Stokes equations, coupled to each other via a bidirectional nonlinear relaxation mechanism. The coupling permits mass/momentum/energy transfer between the phases, and accounts for the conversion of superfluid into normal fluid. We prove the existence of global weak solutions in $\T^3$ for a power-type nonlinearity, beginning from small initial data. The main challenge is to control the inter-phase mass transfer in order to ensure the strict positivity of the normal fluid density, while obtaining time-independent a priori estimates.
	\end{abstract}

	\setcounter{tocdepth}{2} 
	
	
	\section{Introduction and mathematical model}  \label{intro}

    In this article, we present a rigorous analysis of the Pitaevskii model (a micro-scale description) of superfluidity ~\cite{Pitaevskii1959PhenomenologicalPoint} in three dimensions. The system consists of a superfluid phase and a normal fluid phase, described by modified versions of the nonlinear Schr\"odinger equation (NLS) and the Navier-Stokes equations (NSE), respectively. This is one of many different theories proposed to explain and quantify the underlying mechanisms of superfluidity. For more details, see ~\cite{Barenghi2001QuantizedTurbulence,Vinen2006AnTurbulence, Paoletti2011QuantumTurbulence, Berloff2014ModelingTemperatures,Jayanti2021GlobalEquations,Jayanti2022AnalysisSuperfluidity} and references therein. The Pitaevskii model, specifically, works well in the context of small length scales ($\ll$ inter-vortex spacing), and has previously been explored in ~\cite{Jayanti2022LocalSuperfluidity, Jayanti2022UniquenessSuperfluidity, Jang2023Small-dataSuperfluidity}. A similar model has also been numerically simulated in ~\cite{Brachet2023CouplingFlows}. The superfluid phase is represented by a complex-valued wavefunction $\psi$, while the normal fluid is characterized by its density $\rho$, velocity $u$, and pressure $q$. The form of the Pitaevskii model used here is as follows (with the prefix ``c'' in the equation labels signifying that the equations are coupled):
    \begin{align}
		\partial_t \psi + \lambda B\psi &= -\frac{1}{2i}\Delta\psi + \frac{\mu}{i}\lvert\psi\rvert^p \psi \tag{c-NLS} \label{NLS} \\
		B = \frac{1}{2}\left(-i\nabla - u \right)^2 + \mu \lvert \psi \rvert^p &= -\frac{1}{2}\Delta + \frac{1}{2}\lvert u \rvert^2 + iu\cdot\nabla + \mu \lvert \psi \rvert^p \tag{CPL} \label{coupling} \\
		\partial_t \rho + \nabla\cdot(\rho u) &= 2\lambda\Re(\Bar{\psi}B\psi) \tag{c-CON} \label{continuity} \\
		\partial_t (\rho u) + \nabla\cdot (\rho u \otimes u) + \nabla q - \nu \Delta u + \alpha\rho u &= 
			-2\lambda \Im(\nabla \Bar{\psi}B\psi) + \lambda\nabla \Im(\Bar{\psi}B\psi) \tag{c-NSE} \label{NSE} + \frac{\mu}{2}\nabla\lvert\psi\rvert^{p+2} \\
		\nabla\cdot u &= 0 . \tag{DIV} \label{divergence-free}
	\end{align}
	The strength of the superfluid's scattering self-interactions is measured by $\mu>0$, while $\nu>0$ and $\alpha>0$ denote the viscosity and drag coefficient of the normal fluid. The quantum scattering is a power-type nonlinearity, with exponent $p\in [1,\infty)$. The interactions between the two phases are mediated by the nonlinear coupling operator $B$, and the strength of the coupling is quantified by the positive constant $\lambda$. The structure of this coupling permits \textit{bidirectional} mass/momentum/energy transfer between the two phases, which results in a relaxation mechanism for the otherwise non-dissipative NLS. Indeed, the coupling gives ~\eqref{NLS} a parabolic flavor. These equations are supplemented with the initial conditions
	\begin{equation*} \tag{INI} \label{initial conditions}
		\psi(0,x) = \psi_0(x), \quad u(0,x) = u_0(x), \quad \rho(0,x) = \rho_0(x) \quad \text{a.e.} \ x\in\T^3 .
	\end{equation*}
	We use periodic boundary conditions, i.e., we are working in a 3-dimensional torus $[0,1]^3$. The above equations are a slight modification of Pitaevskii's original work ~\cite{Pitaevskii1959PhenomenologicalPoint}, which were valid for any type of scattering interactions and for a compressible, heat-conducting normal fluid. The simplifying assumptions used here are detailed in ~\cite{Jang2023Small-dataSuperfluidity}.

    While ~\eqref{divergence-free} implies that the velocity is divergence-free, this does not mean that the density is constant along particle trajectories. In fact, the density is governed by ~\eqref{continuity}, a transport equation with a complicated source term. This inhomogeneity on the RHS is the principal limiting factor of the analysis, since it forces us to ensure that the normal fluid density does not become zero (vacuum) or negative (unphysical). By integrating ~\eqref{continuity} over $\T^3$, the advective term vanishes and using the positivity of the operator $B$ ~\cite[Lemma 2.7]{Jang2023Small-dataSuperfluidity}, we have
	\begin{equation}
		\frac{d}{dt} \int_{\T^3}\rho \ dx = 2\lambda \Re\int_{\T^3}\Bar{\psi}B\psi \ge 0 .
	\end{equation} 
	This implies that the net mass of the normal fluid does not decrease with time. Due to conservation of total mass (of both fluids), we have
    \begin{equation}
        \frac{d}{dt} \int_{\T^3} \left(\rho + \abs{\psi}^2 \right) \ dx = 0.
    \end{equation}
    In other words, the coupling results in a conversion of superfluid into normal fluid, \textit{on average}. However, the RHS of ~\eqref{continuity} is not necessarily non-negative for all $x\in\T^3$, and we have to control the local mass transfer between the two phases. Indeed, this means bounding the $L^{\infty}$ norm (in space) of the source term, $\Re(\overline{\psi}B\psi)$, as described in Section ~\ref{the strategy}. Since the coupling is a second-order differential operator, this essentially translates into finding an estimate of $\norm{\psi}_{L^{\infty}_x}\norm{\Delta \psi}_{L^{\infty}_x}$, which dictates the required regularity of the wavefunction. In the case of no source term for the continuity equation, there are several results pertaining to the standard version of the incompressible, inhomogeneous NSE --- see ~\cite{Kazhikov1974Fluid,Ladyzhenskaya1978UniqueFluids,Kim1987WeakDensity,Simon1990NonhomogeneousPressure,Lions1996MathematicalMechanics,Danchin2003Density-dependentSpaces,Choe2003StrongFluids,Danchin2006TheFluids} among many other references.
    
    The NLS, an archetypal dispersive PDE, is often used to study quantum systems with low-energy wave interactions, like dipolar gases ~\cite{Carles2008OnGases,Sohinger2011BoundsEquations} and quantum ferrofluids ~\cite{Bland2018QuantumTurbulence}. There is a rich literature on the mathematical analysis of the NLS (see ~\cite{Tao2006NonlinearAnalysis} for a collection of results). One particular aspect of NLS that has attracted much attention is its quantum hydrodynamic (QHD) reformulation ~\cite{Jungel2002LocalEquations,Jungel2010GlobalFluids,Antonelli2017GlobalEquations,Antonelli2021GenuineStability,Wang2021AModel,Antonelli2023AnSystems}. Another connection with the compressible fluid dynamics community has been the study of Korteweg models ~\cite{Hattori1994SolutionsType,Hattori1996GlobalMaterials,Bresch2004QuelquesKorteweg,Carles2012MadelungKorteweg}, of which QHD and even capillary flows~\cite{Antonelli2022GlobalEquations} are special cases. The linear drag included in ~\eqref{NSE} is not unheard of: similar terms have been used previously to either prove the global existence of solutions ~\cite{Chauleur2022GlobalEquations}, or to show relaxation to a steady state ~\cite{Bresch2022OnTerm,Su2022ExponentialForce}. For QHD with a combination of linear drag and electrostatic forces, see~\cite{Jungel2004QuantumDecay,Antonelli2009OnDynamics,Antonelli2012TheDimensions}.

    Given the vast mathematical literature that exists on the NSE and NLS independently, it is surprising that there are hardly any on a combination of the two, like the Pitaevskii model or others ~\cite{Khalatnikov1969AbsorptionPoint,Carlson1996AVortices}. The first attempt to study a combined model was by Antonelli and Marcati ~\cite{Antonelli2015FiniteSuperfluidity}, in which a fractional time-step method pioneered by the same authors ~\cite{Antonelli2009OnDynamics,Antonelli2012OnRotation} was utilized. In this approach, the standard, uncoupled NLS is solved over a small time interval at the end of which the wavefunction is ``updated'' to account for the interactions with the normal fluid. However, this was still a uni-directionally coupled system, i.e., the wavefunction $\psi$ was dependent on the density $\rho$ and velocity $u$, but not the other way around. This yields the standard form of the continuity equation: without a source term.

    The small-data global existence of solutions for the Pitaevskii model on $\T^2$ was established in ~\cite{Jang2023Small-dataSuperfluidity} for the nonlinearity exponent $p\in[1,4)$. Moreover, for $p\ge 4$, an almost-global existence was shown, wherein the existence time grows with decreasing data size, exponentially for $p=4$ and polynomially for $p>4$. The path to the required a priori bounds of $\psi$ is by energy-type estimates, up to $L^{\infty}_t H^{\frac{5}{2}}_x$.
    
    In the current work, we obtain the global existence of weak solutions in $\T^3$, for the entire range of power nonlinearities (characterized by $1\le p<\infty$), thus proving that the inter-phase mass transfer may be controlled for small initial data. We emphasize that we deal with density that is simply in $L^{\infty}_x$, and thus not necessarily differentiable. This limits the highest regularity that one can subject the momentum equation (and the velocity) to. In turn, through the coupling operator $B$, this also restricts the regularity we can achieve for the wavefunction. The primary objective is to derive time-independent a priori bounds on $\psi$ that ensure that the RHS of ~\eqref{continuity} will not lead to non-positive densities.
    
    The approach used for 2D is not applicable in 3D, owing to the insufficiency of the $L^{\infty}_t H^1_x \cap L^2_t H^2_x$ bound for the velocity. This regularity of $u$ is not enough to derive an energy estimate of the required order for $\psi$ in 3D. Any higher regularity of $u$ would mean taking the derivatives of $\rho$, which is off limits. Another roadblock is the extremely slow mass decay for high values of $p$, i.e., when the superfluid interacts much more strongly with itself rather than the normal fluid. In such a scenario, a purely energy-based method~\cite{Jang2023Small-dataSuperfluidity} led to a bifurcation in the existence time of solutions (global or almost-global) for different ranges of the nonlinear index $p$. The mass decay rate (independent of dimension) pervades all levels of energy estimates, dominating the decay of higher norms as well. We overcome these challenges by a hybrid approach: combining the decay of superfluid mass with a maximal regularity estimate for parabolic equations (Section ~\ref{elliptic operators and maximal parabolic regularity}). The time-control of the mass conversion and of the higher order energy norm are presented in Lemmas~\ref{lem:superfluid mass estimate} and~\ref{lem:energy + higher order energy estimates}, and state that the $L^{2}_x$ and $\Dot{H}^{2}_x$ norms of the wavefunction decrease as $(1+t)^{-\frac{1}{p}}$ and $(1+t)^{-\frac{1}{2}-\frac{1}{p}}$, respectively. These allow us to evaluate the integral in ~\eqref{constraint to choose existence time}, independent of the final time $T$, leading to global control of the solution. In order to apply maximal regularity, the initial wavefunction $\psi_0$ must belong to an interpolation (Besov) space, which is carefully chosen to be marginally larger than $H^2$, so that the assumption $\psi_0\in H^2$ is sufficient for our purposes. The resulting solution $\psi$ is shown to belong to $C([0,\infty);H^2)$, among other spaces. To summarize, we demonstrate that wielding the power of parabolic regularity allows us to guarantee global solutions, even when the mass decay is exceedingly small.

    We now briefly outline the notation used in this article. Following this, we state and discuss the main result in Section ~\ref{main result}. Several a priori estimates are derived in Section ~\ref{a priori estimates}, which ends with an argument on ensuring a positive lower bound for the density. In this paper, we only present the required a priori estimates, as the general construction of solutions (and the density renormalization) follows as in ~\cite[Section 4]{Jang2023Small-dataSuperfluidity}.
	
	
	\subsection{Notation} \label{notation}
	We denote by $H^s(\T^3)$ the completion of $C^{\infty}(\T^3)$ under the Sobolev norm $H^s$. When referring to the homogeneous Sobolev spaces, we use $\Dot{H}^s(\T^3)$. Consider a 3D vector-valued function $u\equiv (u_1,u_2,u_3)\in C^{\infty}(\T^3)$. The set of all divergence-free, smooth 3D functions $u$ defines $C^{\infty}_d(\T^3)$. Then, $H^s_d(\T^3)$ is the completion of $C^{\infty}_d(\T^3)$ under the $H^s$ norm. There are many equivalent ways of defining Besov spaces, and the most appropriate choice for our purposes is through the method of real interpolation between Sobolev spaces \cite{Adams1978RealRn,Adams2003SobolevSpaces}. For $1\le q\le \infty, 0<\theta<1$, and $s = (1-\theta)s_1 + \theta s_2$, we define
	\begin{equation*}
		B^s_{2,q} := (H^{s_1},H^{s_2})_{\theta,q}.
	\end{equation*}
	
	The $L^2$ inner product, denoted by $\langle \cdot,\cdot \rangle$, is sesquilinear (the first argument is complex conjugated, indicated by an overbar) to accommodate the complex nature of the Schr\"odinger equation. Explicitly, $\langle \phi,\psi \rangle = \int_{\T^3} \Bar{\phi}\psi \ dx$.
	
	We use the subscript $x$ on a Banach space to denote that the Banach space is defined over $\T^3$. For instance, $L^r_x := L^r(\T^3)$ and $H^s_{d,x} := H^s_d(\T^3)$. For spaces/norms over time, the subscript $t$ is used, such as $L^r_t$. 
	
	We also use the notation $X\lesssim Y$ and $X\gtrsim Y$ to imply that there exists a positive constant $C$ such that $X\le CY$ and $CX\ge Y$, respectively. When appropriate, the dependence of the constant on various parameters shall be denoted using a subscript as $X\lesssim_{k_1,k_2} Y$ or $X\le C_{k_1,k_2}Y$. Throughout the article, $C$ is used to denote a (possibly large) constant that depends on the system parameters listed in ~\eqref{small data condition statement}, while $\kappa$ is used to represent a (small) positive number. The values of $C$ and $\kappa$ can vary across the different steps of calculations.

	\section{Main result and discussion} \label{main result}

    \subsection{Weak solutions and the existence theorem}
    First, we define the notion of a weak solution used here.
	\begin{mydef}[Weak solutions\footnote{See Remark ~\ref{strong or weak solutions?}.}] \label{definition of weak solutions}
		For a given time $T>0$, a triplet $(\psi,u,\rho)$ is a weak solution to the Pitaevskii model if
		\begin{enumerate} [(i)]
			\item 
			$\psi\in L^2(0,T;H^3(\T^3)), u\in L^2(0,T;H^2_d(\T^3)), \rho \in L^{\infty}([0,T]\times\T^3)$, and
			
			\item $\psi$, $u$, and $\rho$ satisfy the governing equations in the sense of distributions for all test functions, i.e.,
			\begin{equation} \label{weak solution wavefunction}
				\begin{aligned}
					&-\int_0^T \int_{\T^3} \left( \psi\partial_t\Bar{\varphi} + \frac{1}{2i}\nabla\psi\cdot\nabla\Bar{\varphi} - \lambda\Bar{\varphi}B\psi - i\mu\Bar{\varphi}\lvert \psi \rvert^p\psi \right) dx \ dt \\ 
					&\quad = \int_{\T^3} \Big( \psi_0\Bar{\varphi}(0) - \psi(T)\Bar{\varphi}(T) \Big) dx
				\end{aligned}
			\end{equation}
			and
			\begin{equation} \label{weak solution velocity}
				\begin{aligned}
					&-\int_0^T \int_{\T^3} \Big( \rho u\cdot \partial_t \Phi + \rho u\otimes u:\nabla\Phi - \nu\nabla u:\nabla\Phi - 2\lambda\Phi\cdot\Im(\nabla\Bar{\psi}B\psi) + \alpha\rho u\cdot\Phi \Big) dx \ dt \\ 
					&\quad = \int_{\T^3} \Big( \rho_0 u_0 \Phi(0) - \rho(T)u(T)\Phi(T) \Big) dx
				\end{aligned}
			\end{equation}
			and
			\begin{equation} \label{weak solution density}
				-\int_0^T \int_{\T^3} \Big( \rho \partial_t \sigma + \rho u\cdot\nabla\sigma + 2\lambda \sigma \Re(\Bar{\psi}B\psi) \Big) dx \ dt = \int_{\T^3} \Big( \rho_0 \sigma(0) - \rho(T)\sigma(T) \Big) dx
			\end{equation}
			where $\psi_0 \in H^2(\T^3)$, $u_0 \in H^1_d(\T^3)$ and $\rho_0 \in L^{\infty}(\T^3)$ are the initial data. The test functions are:
			\begin{enumerate}
				\item a complex-valued scalar field $\varphi \in H^1(0,T;L^2(\T^3))\cap L^2(0,T;H^1(\T^3))$,
				\item a real-valued, divergence-free (3D) vector field $\Phi \in H^1(0,T;L^2_d(\T^3)) \cap L^2(0,T;H^1_d(\T^3))$, and
				\item a real-valued scalar field $\sigma \in H^1(0,T;L^2(\T^3))\cap L^2(0,T;H^1(\T^3))$.
			\end{enumerate}
		\end{enumerate}
	\end{mydef}
 
	\begin{rem} \label{gradient terms in NSE}
		The last two terms in ~\eqref{NSE} are pure gradients, and thus we can absorb them into the pressure, relabeling the latter as $q$. Due to the use of the divergence-free test functions, all gradient terms in the definition of the weak solution disappear. 
	\end{rem}
	
	Now, we state the main result.
	\begin{thm} [Global existence] \label{global existence}
        Fix $p\in [1,\infty)$, and choose $0<\delta< \min\{\frac{1}{3},\frac{1}{p-1}\}$. Let $\psi_0 \in~H^2(\T^3)$, and let $u_0\in H^1_d(\T^3)$. Suppose $0< \mi\le\rho_0\le \Mi <\infty$ a.e.\ in ~$\T^3$. Then, there exists a global weak solution $(\psi,u,\rho)$ to the Pitaevskii model such that the density is always bounded between $\mf\in (0,\mi)$ and $\Mf := \Mi + \mi - \mf$, provided the initial data satisfy the smallness condition
		\begin{equation} \label{small data condition statement}
			\norm{\psi_0}_{H^2_x}^2 + \norm{u_0}_{H^1_x}^2 + \norm{\psi_0}_{L^{p+2}_x}^{p+2} \le \veps_0(\lambda,\mu,\nu,\mi,\Mi,\mf,\alpha,p) .
		\end{equation} 
        The solution has the regularity
		\begin{gather}
			\psi\in C([0,\infty);H^2(\T^3)) \cap L^2(0,\infty;H^3(\T^3)) \cap L^{1+\delta}(0,\infty;\dot{H}^{\frac{7}{2}+\delta_1}(\T^3)) \label{weak solution psi regularity} \\
			u\in C([0,\infty);H^1_d(\T^3)) \cap L^2(0,\infty;H^2(\T^3)) \label{weak solution u regularity} \\
			\rho\in L^{\infty}([0,\infty)\times \T^3)\cap C([0,\infty);L^s(\T^3)) , \label{weak solution rho regularity}
		\end{gather}
		for a sufficiently small $\delta_1>0$, and $1\le s\le 6$. Additionally, the solution satisfies the energy equality
		\begin{equation} \label{energy equality for weak solutions}
			\begin{aligned}
				& \frac{1}{2}\norm{\sqrt{\rho(t)}u(t)}_{L^2_x}^2 + \frac{1}{2}\norm{\nabla\psi(t)}_{L^2_x}^2 + \frac{2\mu}{p+2}\norm{\psi(t)}_{L^{p+2}_x}^{p+2} \\
				&\quad + \nu\norm{\nabla u}_{L^2_{[0,t]}L^2_x}^2 + \alpha\norm{\sqrt{\rho} u}_{L^2_{[0,t]}L^2_x}^2 + 2\lambda\norm{B\psi}_{L^2_{[0,t]}L^2_x}^2 \\ 
				&= \frac{1}{2}\norm{\sqrt{\rho_0}u_0}_{L^2_x}^2 + \frac{1}{2}\norm{\nabla\psi_0}_{L^2_x}^2 + \frac{2\mu}{p+2}\norm{\psi_0}_{L^{p+2}_x}^{p+2} \quad a.e. \ t\in [0,\infty) .
			\end{aligned}
		\end{equation}
	\end{thm}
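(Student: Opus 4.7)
My plan is to combine a Galerkin approximation scheme (of the type constructed in \cite{Jang2023Small-dataSuperfluidity}) with a priori estimates that are uniform in time, and then pass to the limit. The central obstruction, as identified in the introduction, is controlling the local mass transfer source on the right-hand side of \eqref{continuity}: I must bound $\|\Re(\bar\psi B\psi)\|_{L^\infty_x}$ in a way that is integrable on $[0,\infty)$, so that the characteristic formulation of the transport equation keeps $\rho$ strictly between $\mf$ and $\Mf$. Since $B$ is second-order, the key quantity is $\|\psi\|_{L^\infty_x}\|\Delta\psi\|_{L^\infty_x}$, which sits slightly above $H^2$-regularity of $\psi$; extracting that borderline regularity is the technical heart of the proof.

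The a priori estimates proceed in three stages. First, pairing \eqref{NLS} and \eqref{NSE} with appropriate test functions (including $\bar\psi$ for the mass balance and $u$ for the momentum), summing, and using the positivity of $B$ from \cite[Lemma 2.7]{Jang2023Small-dataSuperfluidity} yields the energy equality \eqref{energy equality for weak solutions}; under \eqref{small data condition statement} this keeps the total energy uniformly small, and crucially produces the dissipation $2\lambda\|B\psi\|_{L^2_tL^2_x}^2$ that is the source of parabolic smoothing for $\psi$. Second, integrating \eqref{continuity} over $\T^3$ and applying H\"older's inequality to compare $\|\psi\|_{L^2_x}^{p+2}$ with $\|\psi\|_{L^{p+2}_x}^{p+2}$ gives the differential inequality $\tfrac{d}{dt}\|\psi\|_{L^2_x}^2 \lesssim -\|\psi\|_{L^2_x}^{p+2}$ and hence the algebraic decay $\|\psi(t)\|_{L^2_x}\lesssim(1+t)^{-1/p}$ (Lemma~\ref{lem:superfluid mass estimate}). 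Third, an $\dot H^2_x$-energy estimate for $\psi$ coupled to an $H^1_x$-estimate for $u$, interpolated against this $L^2$ decay, gives $\|\psi(t)\|_{\dot H^2_x}\lesssim(1+t)^{-\frac{1}{2}-\frac{1}{p}}$ (Lemma~\ref{lem:energy + higher order energy estimates}).

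The decisive ingredient is maximal parabolic regularity. The coupling $\lambda B$ imparts \eqref{NLS} a parabolic character via the principal part $\partial_t - \tfrac{\lambda+i}{2}\Delta$, whose real part generates an analytic semigroup. Taking initial data in a real-interpolation Besov space sitting just below $H^2$---chosen so that the hypothesis $\psi_0\in H^2$ is sufficient---abstract maximal regularity lifts $\psi$ to $L^{1+\delta}(0,\infty;\dot H^{7/2+\delta_1}_x)$, which is exactly the space in \eqref{weak solution psi regularity}. Sobolev embedding on $\T^3$ then bounds $\|\psi\|_{L^\infty_x}\|\Delta\psi\|_{L^\infty_x}$ in $L^{(1+\delta)/2}_t$; combined with the algebraic decay of the lower norms, the integral $\int_0^\infty \|\Re(\bar\psi B\psi)\|_{L^\infty_x}\,dt$ is finite and, by shrinking $\veps_0$, can be made arbitrarily small, so that the characteristic representation of \eqref{continuity} delivers $\mf\le\rho\le\Mf$.

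With all bounds uniform in $T$, standard compactness (Aubin--Lions for $u$, transport-equation regularity for $\rho$ via \eqref{continuity}, and the $L^\infty_t H^2_x\cap L^2_t H^3_x$ bound for $\psi$) produces a weak solution in the sense of Definition~\ref{definition of weak solutions}, and the energy equality \eqref{energy equality for weak solutions} passes to the limit from the exact identity satisfied at the Galerkin level. I expect the main obstacle to be the delicate trade-off between regularity and decay: the $L^\infty_x$-only control on $\rho$ caps $u$ at $H^2$, which through $B$ would naively cap $\psi$ at $H^2$ as well; the maximal-regularity step trades a little extra spatial regularity for a little deficit in temporal integrability, and the mass decay has to be quantitative enough to make that trade-off globally integrable for \emph{every} $p\in[1,\infty)$---in particular for large $p$, where the $(1+t)^{-1/p}$ decay is very slow and $\veps_0$ must be chosen to absorb the resulting near-critical semilinear terms so that the bootstrap closes.
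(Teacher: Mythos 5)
Your proposal is correct and follows essentially the same route as the paper: the Lagrangian/characteristics criterion $2\lambda\int_0^T\norm{\psi}_{L^\infty_x}\norm{B\psi}_{L^\infty_x}<\mi-\mf$, the algebraic decay of the superfluid mass and of the $\dot H^2_x$ energy, the maximal parabolic regularity step for $\partial_t-\frac{\lambda+i}{2}\Delta$ with initial data in a real-interpolation Besov space just below $H^2$ to reach $L^{1+\delta}_t\dot H^{7/2+\delta_1}_x$, and a small-data bootstrap closed by choosing $\delta<\min\{\frac13,\frac1{p-1}\}$ so the resulting time integrals converge for every $p$. The only cosmetic difference is that you phrase the mass-decay inequality via integrating \eqref{continuity} and total mass conservation rather than pairing \eqref{NLS} with $\bar\psi$ directly, which is equivalent.
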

    

    The proof of Theorem ~\ref{global existence} is based on a priori estimates, a semi-Galerkin scheme to construct solutions, and an adaptation of the classical renormalization procedure for the density ~\cite[Theorem 2.4]{Lions1996MathematicalMechanics}. Since the coupling operator $B$ contains the velocity $u$ by itself (and not in combination with $\rho$), we limit the calculations to when the density has a positive lower bound. This gives us an indication to the level of regularity expected of the RHS of ~\eqref{continuity}, which in turn defines the spaces in which $\psi$ and $u$ belong to. In order to achieve this, we derive the required a priori control for the wavefunction and velocity, while ensuring that the density is neither differentiated nor does it become zero anywhere in the domain.

    Before a more specific discussion on the method of proof, a few remarks about the result are warranted.
    \begin{rem} \label{restricting s<=6}
        Once we have $\rho\in L^{\infty}_{t,x}$, the renormalization procedure from ~\cite{DiPerna1989OrdinarySpaces} can be used to show that $\rho$ indeed belongs to $C_t L^s_x$. In a finite 3D domain, the Sobolev embedding $H^1 \subset L^s$ for $1\le s\le 6$ accounts for the integrability in Theorem ~\ref{global existence}. It is worth mentioning that in the analogous result in 2D, we get $1\le s< \infty$ due to the more favorable embedding.
    \end{rem}
	\begin{rem} \label{strong or weak solutions?}
		The regularity of the solutions seem to suggest that the wavefunction and velocity are strong solutions. Indeed this is true, as they are strongly continuous in their topologies. On the other hand, the density is truly a weak solution and is the reason for referring to the triplet as a weak solution. This low regularity of the density influences the nature of the calculations that follow, and in fact also prevent us from concluding uniqueness of the weak solutions. See \cite{Jayanti2022UniquenessSuperfluidity} for results akin to weak-strong uniqueness for the Pitaevskii model.
	\end{rem}
    \begin{rem} \label{mass transfer}
        In Lemma ~\ref{lem:superfluid mass estimate}, we establish that the mass of superfluid decreases with time (algebraically) and goes to $0$ as $t\rightarrow\infty$. Due to overall mass conservation, this means an increase in normal fluid mass, and an eventual conversion of all the superfluid into normal fluid. This inter-phase mass transfer is one of the underlying physical phenomena that the Pitaevskii model was designed to explain. In the macro-scale models of superfluidity (like the HVBK equations~\cite{Holm2001,Jayanti2021GlobalEquations}, the coupling between the two fluids is suggestively called \textit{mutual friction}, as it dissipates the overall energy of the system. In the micro-scale model that we are concerned with, such an energy sink can be interpreted as ``heating up'' the Bose-Einstein condensate (superfluid particles) into excited states (normal fluid particles).
    \end{rem}
    \begin{rem} \label{validity of results in 2D}
        We point out that Theorem ~\ref{global existence} is also valid for the 2D Pitaevskii model (with the same regularity, except for the density renormalization argument holding for all $1\le s<\infty$).
    \end{rem}
    \begin{rem} \label{extension to R^3}
        It would be interesting to consider the Pitaevskii model in $\R^3$. It may help to localize the dynamics by using an external confining potential (in the NLS) that rises in strength with increasing distance from the origin. This would be akin to trapped-ion quantum systems in condensed matter physics. In such a scenario, it is plausible to expect that the current results from $\T^3$ would continue to be valid in $\R^3$ as well. We thank one of the anonymous referees for posing this question.
    \end{rem}
    
	\subsection{The strategy} \label{the strategy}

    As indicated above, the main difficulty is to guarantee that if we begin from $\rho_0$ that is bounded below, then the time evolution does not result in a degeneration where $\rho$ vanishes at certain points in the domain. Hence, we define our existence time so that $\rho$ does not go below a fixed lower bound until $t=T_*$. So we aim to show that the chosen lower bound can be maintained for an arbitrarily long time.
	\begin{mydef} [Local existence time] \label{existence time definition}
		Start with an initial density field $0<\mi \le\rho_0(x)\le \Mi <\infty$. Given $0<\mf<\mi$, we define the existence time for the solution as
		\begin{equation} \label{abstract definition existence time}
			T_* := \inf \{t>0 \ \lvert \ \inf_{\T^3}\rho(t,x) = \mf\}.
		\end{equation}
	\end{mydef} 
	Consider the Lagrangian path of a particle starting at $y \in\T^3$, as it is advected by the local velocity. These characteristic curves are denoted by $X_{y}(t)$ and solve the ODE given by
	\begin{equation} \label{characteristics}
		\begin{aligned}
			\frac{d}{dt}X_{y}(t) &= u(t,X_y(t)) \\
			X_y(0) &= y \in\T^3 ,
		\end{aligned}
	\end{equation} 
	where $u$ is the velocity of the normal fluid. Traveling along such a curve, we observe that
	\begin{equation} \label{density along characteristic}
		\rho(t,X_y(t)) = \rho_0(y) + 2\lambda\Re\int_0^t \Bar{\psi}B\psi (\tau,X_y(\tau)) \ d\tau 
	\end{equation} 
	is a (formal) solution to the continuity equation. From ~\eqref{abstract definition existence time} and ~\eqref{density along characteristic}, it is clear that a sufficient condition to ensure the density is bounded from below by $\mf\in(0,\mi)$ is
	\begin{equation} \label{constraint for positive density}
		2\lambda\int_0^T \lvert\Bar{\psi}B\psi\rvert (\tau,X_y(\tau)) \ d\tau < \mi-\mf .
	\end{equation}
	This is, in turn, guaranteed by the sufficiency
	\begin{equation} \label{constraint to choose existence time}
		2\lambda \int_0^T \norm{\psi}_{L^{\infty}_x} \norm{B\psi}_{L^{\infty}_x} < \mi-\mf .
	\end{equation}
	By selecting small enough data so that all the arguments may be bootstrapped, it is possible to achieve ~\eqref{constraint to choose existence time} independently of $T>0$. Since $B\psi$ involves a second-order derivative, its $L^{\infty}_x$ norm translates into high-regularity Sobolev spaces. For $u$, this means showing that it belongs to $L^2_t H^2_x \cap H^1_t L^2_x$, which also proves useful in establishing strong continuity in time for the solution. As for the wavefunction, we make use of the parabolic nature of ~\eqref{NLS} to derive the necessary regularity (see Lemma ~\ref{maximal parabolic regularity} below). It is important to remember that throughout these calculations, we handle the density only in $L^{\infty}_x$, and not in any derivative spaces. 

    
    \subsection{Elliptic operators and maximal parabolic regularity} \label{elliptic operators and maximal parabolic regularity}
    We now define uniform ellipticity in the context of complex-valued Banach spaces, before stating the maximal parabolic regularity result that will be utilized in Lemma ~\ref{lem:maximal regularity for psi}.

    \begin{mydef}[Uniform $(K,\zeta)$-ellipticity \cite{Pruss2001SolvabilityTime}] \label{K,zeta ellipticity}
        For a complex-valued Banach space $X$, consider the differential operator
        \begin{equation} \label{differential operator}
            A(t,x) = \sum_{\abs{\alpha}=2m} a_{\alpha}(t,x) \partial^{\alpha} ,
        \end{equation}
        with domain $D(A(t,x))\subset X$, where $\alpha$ is a multi-index and $\partial^{\alpha}$ denotes spatial derivatives. The coefficients $a_{\alpha}$ are bounded and uniformly continuous functions from $[0,T]\times \R^n$ to $\C^{N\times N}$ for some $n,N\in \N$. The principal symbol associated with this operator is
        \begin{equation} \label{principal symbol differential operator}
            \Tilde{A}(t,x,\xi) = (-1)^m \sum_{\abs{\alpha}=2m} a_{\alpha}(t,x) \xi^{\alpha}.
        \end{equation}
        The operator $A(t,x)$ is said to be uniformly $(K,\zeta)$-elliptic if there exist $K\ge 1$ and $\zeta \in \left[0,\frac{\pi}{2}\right)$ such that
        \begin{enumerate}[(i)]
            \item $\sum_{\abs{\alpha}=2m} \norm{a_{\alpha}}_{L^{\infty}_{t,x}} \le K$,
            \item $\abs{\Tilde{A}(t,x,\xi)^{-1}} \le K$, and
            \item $\sigma\left(\Tilde{A}(t,x,\xi)\right) \subset \Sigma_{\zeta}\setminus \{0\}$,
        \end{enumerate}
        for $(t,x)\in [0,T]\times \R^n$, and $\xi\in \R^n$ with $\abs{\xi}=1$. Here, $\sigma(B)$ refers to the spectrum of the operator $B$, and $\Sigma_{\zeta}:= \{z\in\C : \abs{\arg{z}}\le \zeta\}$ is a sector in the right half of the complex plane.
    \end{mydef}

    It is possible to show that a uniformly $(K,\zeta)$-elliptic operator generates an analytic semigroup of negative type, leading to the maximal regularity below.
    
    \begin{lem}[Maximal parabolic regularity] \label{maximal parabolic regularity}
        Let $X$ be a (complex-valued) reflexive Banach space and $A\colon X_1\rightarrow X$ be a $(K,\zeta)$-elliptic operator defined on $D(A) = X_1 \subset X$. For $T>0$, consider the initial value problem
        \begin{equation} \label{parabolic PDE}
            \begin{gathered}
                \partial_t u(t) + Au(t) = f(t) \\
                u(0) = u_0 ,
            \end{gathered}
        \end{equation}
        where $f\in L^r([0,T];X)$ with $1<r<\infty$ and $u_0 \in X$. If it is known that $u_0$ belongs to the real interpolation space $Y:=(X,X_1)_{1-\frac{1}{r},r}$, then there exists a unique solution $u\in W^{1,r}([0,T];X) \cap L^r([0,T];X_1)$ to ~\eqref{parabolic PDE} satisfying the maximal parabolic regularity estimate
        \begin{equation}
            \norm{u}_{L^r([0,T];X)} + \norm{u}_{L^r([0,T];X_1)} + \norm{\partial_t u}_{L^r([0,T];X)} \le C_r \left( \norm{u_0}_{Y} + \norm{f}_{L^r([0,T];X)} \right).
        \end{equation}
    \end{lem}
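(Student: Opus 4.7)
\medskip

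\noindent\textbf{Proof plan.} The plan is to prove this by the standard operator-theoretic route: I would split the solution into a homogeneous part $u_h(t) = e^{-tA}u_0$ and a (Duhamel) inhomogeneous part $u_p(t) = \int_0^t e^{-(t-s)A} f(s)\,ds$, establish the two estimates separately, and then sum them. Uniqueness is easy once existence is in hand: any two solutions would have difference $w$ solving $\partial_t w + Aw = 0$ with $w(0) = 0$, and applying $e^{-sA}$ to the identity $(\partial_t + A)w = 0$ and integrating gives $w \equiv 0$ by semigroup uniqueness on $X$.

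\medskip

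\noindent\textbf{Step 1: Analytic semigroup from ellipticity.} The first task is to convert the pointwise spectral condition $\sigma(\tilde A(t,x,\xi)) \subset \Sigma_\zeta \setminus \{0\}$, together with the uniform bounds on $a_\alpha$ and $\tilde A^{-1}$, into a resolvent estimate of the form
\begin{equation*}
\|(\mu + A)^{-1}\|_{\mathcal{L}(X)} \le \frac{C}{1+|\mu|}, \qquad \mu \in \Sigma_{\pi - \zeta - \epsilon},
\end{equation*}
on a sector of half-angle strictly larger than $\pi/2$. This is the classical elliptic resolvent construction via a parametrix and Fourier analysis (freezing coefficients, inverting the symbol using that $\tilde A(t,x,\xi) - \mu$ is invertible on the sector, then perturbing); since the paper works on the torus, one can either use periodic Fourier multipliers directly or transfer from $\R^n$ via partition of unity. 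The resolvent estimate is exactly what is needed to conclude that $-A$ generates an analytic, exponentially decaying semigroup $(e^{-tA})_{t \ge 0}$ on $X$.

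\medskip

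\noindent\textbf{Step 2: Homogeneous and inhomogeneous estimates.} For the homogeneous part, the key input is the Lions--Peetre trace characterization: $u_0 \in (X, X_1)_{1-1/r, r}$ iff $u_h(t) = e^{-tA} u_0$ belongs to $L^r(\R_+; X_1) \cap W^{1,r}(\R_+; X)$, with equivalent norms. Restricting to $[0,T]$ gives the required bound by $C_r \|u_0\|_Y$. For the inhomogeneous part with $u_0 = 0$, the maximal $L^r$-regularity estimate
\begin{equation*}
\Big\| \int_0^\cdot A e^{-(\cdot - s)A} f(s) \, ds \Big\|_{L^r([0,T];X)} \le C_r \|f\|_{L^r([0,T]; X)}
\end{equation*}
is the content of the Dore--Venni / Weis theorem on operator-valued Fourier multipliers in UMD spaces, applied to the convolution kernel $t \mapsto A e^{-tA}$. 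Once this holds, $\|\partial_t u_p\|_{L^r(X)} \le \|Au_p\|_{L^r(X)} + \|f\|_{L^r(X)}$ gives the $\partial_t$ bound automatically, and the $L^r(X_1)$ bound is $\|Au_p\|_{L^r(X)}$ itself.

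\medskip

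\noindent\textbf{Main obstacle.} The hard part is Step 2 for the inhomogeneous piece: maximal $L^r$-regularity is \emph{not} automatic from analyticity of the semigroup (this is the well-known counterexample by Kalton--Lancien in general Banach spaces). One must exploit that $X$ is reflexive (in fact the targeted application works in $X = L^2$ or $X = L^q$, which are UMD), and that the $(K,\zeta)$-elliptic operator admits a bounded $H^\infty$-calculus on a sector of angle less than $\pi/2$, yielding $R$-boundedness of the family $\{\mu (\mu + A)^{-1} : \mu \in i\R\}$ needed for Weis's theorem. This is precisely the path carried out in Pr\"uss \cite{Pruss2001SolvabilityTime}, and I would invoke that reference for the operator-theoretic input rather than reprove it. The remaining work is then bookkeeping: combine $u = u_h + u_p$, observe that $Au \in L^r(X)$ means $u \in L^r(X_1)$ since $A: X_1 \to X$ is closed (indeed bijective modulo a shift, by Step~1), and collect constants to produce the displayed inequality.
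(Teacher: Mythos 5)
Your proposal is correct and matches the paper's treatment: the paper offers no proof of this lemma beyond citing Pr\"uss \cite{Pruss2001SolvabilityTime}, and your outline (analytic semigroup generation from $(K,\zeta)$-ellipticity, the Lions--Peetre trace characterization of $(X,X_1)_{1-\frac{1}{r},r}$ for the homogeneous part, and Dore--Venni/Weis for the Duhamel term) is precisely the argument carried out in that reference, to which you also defer the key operator-theoretic input. Your caveat that reflexivity alone does not guarantee maximal $L^r$-regularity is well taken as a criticism of the lemma's hypotheses as stated, but it is harmless for the paper's purposes, since the application takes $X=L^2(\T^3)$, a Hilbert space, where analyticity of the semigroup already suffices (de Simon's theorem).
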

    
    For the proof of this lemma, see ~\cite[Section 4]{Pruss2001SolvabilityTime}.
	
	

	\section{A priori estimates} \label{a priori estimates}
	
	We now derive the required a priori estimates, using formal calculations. We assume the wavefunction and velocity are smooth functions and that the density is bounded from below by $\mf>0$ in $[0,T]$. Here, $T$ is any time less than the local existence time $T_*$, and is extended to global existence in Section ~\ref{ensuring positive density}. The derivations of some estimates are identical to the 2D case ~\cite{Jang2023Small-dataSuperfluidity}, and are not repeated here.
	
	\subsection{Superfluid mass estimate}
	
	\begin{lem}[Algebraic decay rate of superfluid mass] \label{lem:superfluid mass estimate}
		The mass $S(t)$ of the superfluid decays algebraically in time as $(1+t)^{-\frac{2}{p}}$. Specifically,
        \begin{equation} \label{superfluid mass bound}
			S(t) := \norm{\psi}_{L^2_x}^2(t) \lesssim \frac{S_0}{\left(1+S_0^{\frac{p}{2}} t\right)^{\frac{2}{p}}} , \quad t \in [0,T] ,
		\end{equation} 
		where $S_0 := \norm{\psi_0}_{L^2_x}^2$ is the initial mass of the superfluid.
	\end{lem}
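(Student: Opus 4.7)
The plan is to produce the bound by pairing \eqref{NLS} with $\bar\psi$, converting the coupling term into a coercive quantity via the positivity of $B$, and then closing a scalar differential inequality. Concretely, taking the $L^2$ inner product of \eqref{NLS} with $\psi$ and taking real parts, the two terms $\frac{1}{2i}\Delta\psi$ and $\frac{\mu}{i}|\psi|^p\psi$ contribute only imaginary values (since $-\Delta$ and multiplication by $|\psi|^p$ are self-adjoint), so they vanish. This yields
\begin{equation*}
    \frac{1}{2}\frac{d}{dt} S(t) = -\lambda \,\mathrm{Re}\!\int_{\T^3} \bar\psi\, B\psi \,dx.
\end{equation*}

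Next, since $u$ is real and divergence-free, the operator $\frac{1}{2}(-i\nabla - u)^2$ is self-adjoint and non-negative, and therefore
\begin{equation*}
    \mathrm{Re}\!\int_{\T^3} \bar\psi\, B\psi\, dx = \frac{1}{2}\bigl\|(-i\nabla - u)\psi\bigr\|_{L^2_x}^2 + \mu\|\psi\|_{L^{p+2}_x}^{p+2} \;\geq\; \mu\|\psi\|_{L^{p+2}_x}^{p+2}.
\end{equation*}
Discarding the (non-negative) kinetic piece gives the differential inequality
\begin{equation*}
    \frac{d}{dt} S(t) \leq -2\lambda\mu \|\psi\|_{L^{p+2}_x}^{p+2}.
\end{equation*}

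The key non-trivial step is then to relate the dissipation term to $S$ itself. Here I would use Hölder's inequality on the bounded domain $\T^3$ with conjugate exponents $\frac{p+2}{2}$ and $\frac{p+2}{p}$, which gives $\|\psi\|_{L^2_x}^2 \leq |\T^3|^{p/(p+2)} \|\psi\|_{L^{p+2}_x}^2$, so that
\begin{equation*}
    \|\psi\|_{L^{p+2}_x}^{p+2} \;\gtrsim\; \|\psi\|_{L^2_x}^{p+2} = S(t)^{1+p/2}.
\end{equation*}
Substituting this back produces the autonomous scalar ODE inequality $\frac{d}{dt} S \lesssim -S^{1+p/2}$, equivalently $\frac{d}{dt}(S^{-p/2}) \gtrsim 1$. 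Integrating from $0$ to $t$ yields $S(t)^{-p/2} \gtrsim S_0^{-p/2} + t$, and hence
\begin{equation*}
    S(t) \;\lesssim\; \frac{S_0}{\bigl(1 + S_0^{p/2}\, t\bigr)^{2/p}},
\end{equation*}
as claimed. I do not foresee a serious obstacle: the only care needed is to verify that the dispersive and self-interaction terms drop out after taking real parts and that $B$ is genuinely non-negative on $\psi$ (both of which are already recorded earlier in the paper via \cite{Jang2023Small-dataSuperfluidity}). The argument is a direct consequence of energy-method cancellations together with the volume-finite Hölder embedding $L^{p+2}_x \hookrightarrow L^2_x$ on $\T^3$.
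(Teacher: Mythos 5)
Your argument is correct and is essentially the proof the paper relies on: the paper defers to Lemma 3.1 of \cite{Jang2023Small-dataSuperfluidity}, which proceeds exactly as you do, pairing \eqref{NLS} with $\bar\psi$, using the nonnegativity of $\tfrac12(-i\nabla-u)^2$ to keep only the coercive term $\mu\|\psi\|_{L^{p+2}_x}^{p+2}$, applying H\"older on the finite-volume torus to get $\|\psi\|_{L^{p+2}_x}^{p+2}\gtrsim S^{1+p/2}$, and integrating the resulting ODE inequality. The only cosmetic point is to note that the harmless constant $2\lambda\mu$ in the ODE is absorbed into the implicit constant of \eqref{superfluid mass bound} via $(1+as)^{-2/p}\le a^{-2/p}(1+s)^{-2/p}$ for $a\le 1$.
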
 
    \begin{proof}
        See Lemma 3.1 of ~\cite{Jang2023Small-dataSuperfluidity}.
    \end{proof}

	\subsection{Energy estimate} \label{energy estimate}
	The energy of the system is defined as
	\begin{equation} \label{energy definition}
		E(t) := \frac{1}{2}\norm{\sqrt{\rho(t)}u(t)}_{L^2_x}^2 + \frac{1}{2}\norm{\nabla\psi(t)}_{L^2_x}^2 + \frac{2\mu}{p+2}\norm{\psi(t)}_{L^{p+2}_x}^{p+2}.
	\end{equation}
	\begin{lem} \label{energy balance}
	    The energy balance of the system is given by
        \begin{equation} \label{Energy equation}
		\frac{d}{dt}E(t) + \nu\norm{\nabla u}_{L^2_x}^2 + \alpha\norm{\sqrt{\rho} u}_{L^2_x}^2 + 2\lambda\norm{B\psi}_{L^2_x}^2 = 0.
	\end{equation}
	\end{lem}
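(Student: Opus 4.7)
The plan is to derive the energy equality by testing each of the governing equations against an appropriate quantity and summing, with the cross-coupling terms between the NLS and NSE identities cancelling exactly due to the structure of the Pitaevskii coupling. All computations are formal, assuming sufficient smoothness of $\psi$ and $u$, together with the lower bound $\rho \ge \mf > 0$ on $[0,T]$.

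First I would test \eqref{NSE} with $u$ and integrate over $\T^3$. Using the identity $u\cdot[\partial_t(\rho u) + \nabla\cdot(\rho u\otimes u)] = \tfrac{1}{2}\partial_t(\rho\abs{u}^2) + \tfrac{1}{2}\nabla\cdot(\rho u\abs{u}^2) + \tfrac{1}{2}\abs{u}^2[\partial_t\rho + \nabla\cdot(\rho u)]$, substituting \eqref{continuity} into the final bracket, invoking \eqref{divergence-free} to discard the transport and pressure terms, and absorbing the pure-gradient source terms into the pressure (Remark \ref{gradient terms in NSE}), one obtains
\begin{equation*}
\frac{1}{2}\frac{d}{dt}\norm{\sqrt{\rho}u}_{L^2_x}^2 + \nu\norm{\nabla u}_{L^2_x}^2 + \alpha\norm{\sqrt{\rho}u}_{L^2_x}^2 = -\lambda\int_{\T^3}\abs{u}^2\Re(\bar{\psi}B\psi)\,dx - 2\lambda\int_{\T^3}u\cdot\Im(\nabla\bar{\psi}B\psi)\,dx.
\end{equation*}

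For the superfluid contribution to $E$, an integration by parts gives $\frac{d}{dt}\big[\tfrac{1}{2}\norm{\nabla\psi}_{L^2_x}^2 + \tfrac{2\mu}{p+2}\norm{\psi}_{L^{p+2}_x}^{p+2}\big] = \Re\langle \partial_t\psi,\,-\Delta\psi + 2\mu\abs{\psi}^p\psi\rangle$. The key algebraic observation is that \eqref{NLS}, multiplied by $2i$, can be recast as $-\Delta\psi + 2\mu\abs{\psi}^p\psi = 2i(\partial_t\psi + \lambda B\psi)$, whence the above reduces to $2\lambda\Im\int_{\T^3}\partial_t\psi\,\overline{B\psi}\,dx$, since $\Re(i\abs{\partial_t\psi}^2)=0$. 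Re-substituting $\partial_t\psi$ from \eqref{NLS} inside this integral and using the defining identity $-\tfrac{1}{2}\Delta\psi + \mu\abs{\psi}^p\psi = B\psi - \tfrac{1}{2}\abs{u}^2\psi - iu\cdot\nabla\psi$ to eliminate the dispersive and self-interaction pieces, then noting that $\norm{B\psi}_{L^2_x}^2$ is real, one arrives at
\begin{equation*}
\frac{d}{dt}\!\left[\frac{1}{2}\norm{\nabla\psi}_{L^2_x}^2 + \frac{2\mu}{p+2}\norm{\psi}_{L^{p+2}_x}^{p+2}\right] + 2\lambda\norm{B\psi}_{L^2_x}^2 = \lambda\int_{\T^3}\abs{u}^2\Re(\bar{\psi}B\psi)\,dx + 2\lambda\int_{\T^3}u\cdot\Im(\nabla\bar{\psi}B\psi)\,dx.
\end{equation*}

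Adding these two identities produces \eqref{Energy equation}, as the two coupling integrals on the right-hand sides cancel exactly. The main technical obstacle is the careful bookkeeping of real and imaginary parts in the NLS calculation; the cancellation ultimately reflects the bidirectional character of the Pitaevskii coupling, in that the continuity source $2\lambda\Re(\bar\psi B\psi)$ and the NSE source $-2\lambda\Im(\nabla\bar\psi B\psi)$ are tuned precisely to annihilate the cross-terms generated by testing \eqref{NLS} against $2\overline{B\psi}$.
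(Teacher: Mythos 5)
Your derivation is correct and is essentially the same formal energy computation that the paper defers to (Section 3.2 of the cited 2D work): test \eqref{NSE} with $u$, use \eqref{continuity} to handle the $\tfrac12\abs{u}^2[\partial_t\rho+\nabla\cdot(\rho u)]$ term, pair \eqref{NLS} with $\overline{B\psi}$ for the dispersive/potential energies, and observe the exact cancellation of the two coupling integrals. All the sign and real/imaginary-part bookkeeping in your argument checks out, so nothing further is needed.
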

    \begin{proof}
        See Section 3.2 of ~\cite{Jang2023Small-dataSuperfluidity}.
    \end{proof}
	Integrating ~\eqref{Energy equation} in time, we observe that the energy is bounded from above as
	\begin{equation} \label{energy bound E0}
		E(t) + \nu\norm{\nabla u}_{L^2_{[0,T]}L^2_x}^2 + \alpha\norm{\sqrt{\rho}u}_{L^2_{[0,T]}L^2_x}^2 + 2\lambda\norm{B\psi}_{L^2_{[0,T]}L^2_x}^2 \\ = E_0 , \quad t\in [0,T] ,
	\end{equation}  
	where
	\begin{equation} \label{E0 definition}
		E_0 := \frac{1}{2}\norm{\sqrt{\rho_0}u_0}_{L^2_x}^2 + \frac{1}{2}\norm{\nabla\psi_0}_{L^2_x}^2 + \frac{2\mu}{p+2}\norm{\psi_0}_{L^{p+2}_x}^{p+2}
	\end{equation}
	denotes the initial energy of the system. Next, we show that the energy is not just bounded, but also decays (algebraically) with time. In order to achieve this, we observe that $\norm{B\psi}_{L^2_x}$ can be rewritten as $\norm{D^2\psi}_{L^2_x}$, at the expense of some nonlinear terms on the RHS. More precisely (see equation (3.14) in ~\cite{Jang2023Small-dataSuperfluidity} for the exact derivation),
	\begin{equation*} \label{B psi is bounded from below by D^2 psi}
		\begin{aligned}
			\norm{B\psi}_{L^2_x}^2 \ge \frac{1}{8}\norm{D^2\psi}_{L^2_x}^2 - C\norm{\abs{u}^2\psi}_{L^2_x}^2 - C\norm{u\cdot\nabla\psi}_{L^2_x}^2 + \frac{1}{C}\norm{\psi}_{L^{2p+2}_x}^{2p+2} + \frac{1}{C} \norm{\nabla\abs{\psi}^{\frac{p}{2}+1}}_{L^2_x}^2 .
		\end{aligned}
	\end{equation*}  
	Thus, ~\eqref{Energy equation} now becomes
	\begin{equation} \label{energy inequation 1}
		\begin{aligned}
			&\frac{dE}{dt} + \nu\norm{\nabla u}_{L^2_x}^2 + \alpha\norm{\sqrt{\rho} u}_{L^2_x}^2 + \frac{\lambda}{4}\norm{D^2\psi}_{L^2_x}^2 + \frac{1}{C}\norm{\psi}_{L^{2p+2}_x}^{2p+2} + \frac{1}{C}\norm{\nabla\abs{\psi}^{\frac{p}{2}+1}}_{L^2_x}^2 \\ 
			&\lesssim \norm{\abs{u}^2\psi}_{L^2_x}^2 + \norm{u\cdot\nabla\psi}_{L^2_x}^2 \\
			&=: I_1 + I_2.
		\end{aligned}
	\end{equation}
	The first term on the RHS is estimated as
	\begin{equation*}
		I_1 \lesssim \norm{u}_{L^6_x}^4\norm{\psi}_{L^6_x}^2 \lesssim \norm{u}_{H^1_x}^4\norm{\psi}_{H^1_x}^2
	\end{equation*}
	using H\"older's inequality and Sobolev embedding. For the second term in ~\eqref{energy inequation 1}, we interpolate the $L^3_x$ norm, and apply the Poincar\'e, H\"older's, and Young's inequalities, as well as Sobolev embedding to get
	\begin{align*}
		I_2 \lesssim \norm{u}_{L^6_x}^2\norm{\nabla\psi}_{L^3_x}^2 \lesssim \norm{u}_{H^1_x}^2\norm{\nabla\psi}_{L^2_x}\norm{D^2\psi}_{L^2_x}
		\le C_{\kappa}\norm{u}_{H^1_x}^4\norm{\nabla\psi}_{L^2_x}^2 + \kappa\norm{D^2\psi}_{L^2_x}^2 .
	\end{align*}  
	We also use the Poincar\'e inequality to convert the last term on the LHS of ~\eqref{energy inequation 1} into a coercive term for the internal energy term $\frac{2\mu}{p+2}\norm{\psi}_{L^{p+2}_x}^{p+2}$ in $E(t)$. To this end, we observe that
	\begin{equation} \label{poincare for potential energy}
		\begin{aligned}
			\norm{\psi}_{L^{p+2}_x}^{p+2} &\le \norm{\abs{\psi}^{\frac{p}{2}+1} - \frac{1}{\abs{\T^3}}\int_{\T^3} \abs{\psi}^{\frac{p}{2}+1}}_{L^2_x}^2 + \norm{\frac{1}{\abs{\T^3}}\int_{\T^3} \abs{\psi}^{\frac{p}{2}+1}}_{L^2_x}^2 \lesssim \norm{\nabla\abs{\psi}^{\frac{p}{2}+1}}_{L^2_x}^2 + \norm{\psi}_{L^{\frac{p}{2}+1}}^{p+2} \\ &\le C\norm{\nabla\abs{\psi}^{\frac{p}{2}+1}}_{L^2_x}^2 + \kappa \norm{\psi}_{L^{p+2}_x}^{p+2} + C_{\kappa} \norm{\psi}_{L^2_x}^{p+2}.
		\end{aligned}
	\end{equation}
	In the last inequality, we interpolated between the $L^{p+2}_x$ and $L^2_x$ norms, which is valid for $p>2$. For a sufficiently small $\kappa$, the second term on the RHS is absorbed into the LHS. On the other hand, when $1\le p\le 2$, the finite size of the domain implies $L^2_x \subseteq L^{\frac{p}{2}+1}_x$, which again leads to~\eqref{poincare for potential energy}. Thus, for any $p\ge 1$, ~\eqref{energy inequation 1} becomes
	\begin{equation} \label{energy inequation 3}
    \begin{aligned}
		&\frac{dE}{dt} + \nu\norm{\nabla u}_{L^2_x}^2 + \alpha\norm{\sqrt{\rho} u}_{L^2_x}^2 + \frac{1}{C}\norm{D^2\psi}_{L^2_x}^2 + \frac{1}{C}\norm{\psi}_{L^{p+2}_x}^{p+2} + \frac{1}{C}\norm{\psi}_{L^{2p+2}_x}^{2p+2} \\
		&\le C\norm{\psi}_{L^2_x}^{p+2} +  C\norm{u}_{H^1_x}^4\norm{\psi}_{H^1_x}^2.
    \end{aligned}
	\end{equation}  
	In order to show a decaying norm, we need coercive terms on the LHS, which have been achieved. To control the RHS (particularly the second term), we derive a balance equation for a higher-order energy $X(t)$, defined in~\eqref{eq:defining X}. Combining $E(t)$ with $X(t)$ allows us to close the estimates.
	
	\subsection{Higher-order energy estimate} \label{higher order estimate}
	We now present more a priori bounds for $\psi$ and $u$, involving one more derivative than the energy $E$.
	
	\subsubsection{The Schr\"odinger equation} \label{NLS higher order estimate}
	Acting upon ~\eqref{NLS} with the Laplacian $-\Delta$, multiplying by $-\Delta\Bar{\psi}$, taking the real part, and integrating over the domain yields
	\begin{equation} \label{schrodinger equation higher order first step}
		\begin{aligned}
			\frac{1}{2}\frac{d}{dt}\norm{\Delta\psi}_{L^2_x}^2 &= - \lambda\Re\int_{\T^3}(\Delta^2\Bar{\psi})B\psi + \mu\Im\int_{\T^3}(\Delta^2\Bar{\psi})\abs{\psi}^p\psi \\
			&=: I_3 + I_4.
		\end{aligned}
	\end{equation} 
	The first term on the RHS of ~\eqref{NLS} leads to a term which vanishes due to the periodic boundary conditions. We now estimate the RHS of ~\eqref{schrodinger equation higher order first step}. For the first term, we have
	\begin{align*}
		I_3 &= \lambda\Re\int_{\T^3}\nabla(\Delta\Bar{\psi})\cdot\nabla\left(-\frac{1}{2}\Delta\psi + \frac{1}{2}\abs{u}^2\psi + iu\cdot\nabla\psi +\mu\abs{\psi}^p\psi\right) \\
		&= -\frac{\lambda}{2}\norm{D^3\psi}_{L^2_x}^2 + \lambda\Re\int_{\T^3}\nabla(\Delta\Bar{\psi})\cdot\nabla\left(\frac{1}{2}\abs{u}^2\psi + iu\cdot\nabla\psi +\mu\abs{\psi}^p\psi\right) \\
		&\le -\frac{\lambda}{4}\norm{D^3\psi}_{L^2_x}^2 + C\norm{\nabla(\abs{u}^2\psi)}_{L^2_x}^2 + C\norm{\nabla(u\cdot\nabla\psi)}_{L^2_x}^2 + C\norm{\nabla(\abs{\psi}^p\psi)}_{L^2_x}^2.
	\end{align*}
	The first term on the RHS acts as the dissipative term for $\psi$ at this higher-order energy level. For $I_4$, we integrate by parts and use H\"older's inequality to obtain
	\begin{equation*}
		I_4 = -\mu\Im\int_{\T^3}\nabla(\Delta\Bar{\psi})\cdot\nabla(\abs{\psi}^p\psi) \le \frac{\lambda}{8}\norm{D^3\psi}_{L^2_x}^2 + C\norm{\nabla(\abs{\psi}^p\psi)}_{L^2_x}^2.
	\end{equation*}
	Thus, ~\eqref{schrodinger equation higher order first step} becomes
	\begin{equation} \label{schrodinger equation higher order third step}
		\begin{aligned}
			\frac{d}{dt}\norm{\Delta\psi}_{L^2_x}^2 + \frac{1}{C}\norm{D^3\psi}_{L^2_x}^2 &\lesssim  \norm{\nabla\left(\abs{u}^2\psi \right)}_{L^2_x}^2 + \norm{\nabla(u\cdot\nabla\psi)}_{L^2_x}^2 + \norm{\nabla \left( \abs{\psi}^p\psi \right) }_{L^2_x}^2 \\
			&=: I_5 + I_6 + I_7.
		\end{aligned}
	\end{equation}
	The first term is bounded using the Poincar\'e inequality, Sobolev embedding, and Lebesgue interpolation as
	\begin{align*}
		I_5 &\lesssim  \norm{u}_{L^6_x}^2\norm{\nabla u}_{L^3_x}^2\norm{\psi}_{L^{\infty}_x}^2 + \norm{u}_{L^6_x}^4\norm{\nabla\psi}_{L^6_x}^2 \\
		&\lesssim \norm{u}_{H^1_x}^2\norm{\nabla u}_{L^2_x}\norm{\Delta u}_{L^2_x}\norm{\psi}_{H^2_x}^2 + \norm{u}_{H^1_x}^4 \norm{\Delta\psi}_{L^2_x}^2 \\
		&\le C_{\kappa}\norm{u}_{H^1_x}^4 \norm{\nabla u}_{L^2_x}^2 \norm{\psi}_{H^2_x}^4 + \kappa\norm{\Delta u}_{L^2_x}^2 + C\norm{u}_{H^1_x}^4 \norm{\Delta\psi}_{L^2_x}^2.
	\end{align*}
	We applied Young's inequality to extract out dissipative terms in the last step. Again, $\kappa$ denotes a small number whose value shall be fixed later on, and $C_{\kappa}$ is a constant whose value depends on $\kappa$ and the system parameters. For the second term on the RHS of ~\eqref{schrodinger equation higher order third step}, we have
	\begin{align*}
		I_6 &\lesssim \norm{\nabla u}_{L^3_x}^2 \norm{\nabla\psi}_{L^6_x}^2 + \norm{u}_{L^6_x}^2 \norm{D^2\psi}_{L^3_x}^2 \\
		&\lesssim \norm{\nabla u}_{L^2_x}\norm{\Delta u}_{L^2_x} \norm{\Delta\psi}_{L^2_x}^2 + \norm{u}_{H^1_x}^2 \norm{\Delta\psi}_{L^2_x} \norm{D^3\psi}_{L^2_x} \\
		&\le C_{\kappa}\norm{\nabla u}_{L^2_x}^2 \norm{\Delta\psi}_{L^2_x}^4 + \kappa\norm{\Delta u}_{L^2_x}^2 + C_{\kappa}\norm{u}_{H^1_x}^4 \norm{\Delta\psi}_{L^2_x}^2 + \kappa\norm{D^3\psi}_{L^2_x}^2
	\end{align*} 
	Finally, we apply the Poincar\'e inequality and Sobolev embedding to bound $I_7$. This leads to
	\begin{equation} \label{nonlinear term p<=2}
		\begin{aligned}
			I_7 &\lesssim \norm{\abs{\psi}^p \abs{\nabla\psi}}_{L^2_x}^2 \lesssim \norm{\psi}_{L^{\infty}_x}^{2p}\norm{\nabla\psi}_{L^2_x}^2 \lesssim \left(\norm{\psi}_{L^2_x}^{2p} + \norm{\Delta\psi}_{L^2_x}^{2p}\right) \norm{\nabla\psi}_{L^2_x}^2 \\ 
            &\lesssim \norm{\psi}_{L^2_x}^{2p} \norm{\Delta\psi}_{L^2_x}^2 +  \norm{\Delta\psi}_{L^2_x}^{2p+2} .
		\end{aligned}
	\end{equation}
	
	Combining all these inequalities into ~\eqref{schrodinger equation higher order third step}, and absorbing $\kappa\norm{D^3\psi}_{L^2_x}^2$ into the LHS, we end up with
	\begin{equation} \label{schrodinger equation higher order fourth step}
		\begin{aligned}
			&\frac{d}{dt}\norm{\Delta\psi}_{L^2_x}^2 + \frac{1}{C}\norm{D^3\psi}_{L^2_x}^2 \\ 
			&\le C_{\kappa}\left(\norm{\nabla u}_{L^2_x}^2\norm{\Delta \psi}_{L^2_x}^4 + \norm{u}_{H^1_x}^4 \norm{\nabla u}_{L^2_x}^2  \norm{\psi}_{H^2_x}^4 \right) 
		    + C\norm{u}_{H^1_x}^4\norm{\Delta \psi}_{L^2_x}^2 + C\norm{\psi}_{L^2_x}^{2p} \norm{\Delta\psi}_{L^2_x}^2 \\ 
			&\quad + C\norm{\Delta\psi}_{L^2_x}^{2p+2} + \kappa\norm{\Delta u}_{L^2_x}^2.
		\end{aligned}
	\end{equation}
	This constitutes the higher-order estimate for the wavefunction. Next, we combine this with corresponding estimates for the velocity.
	
	\subsubsection{The Navier-Stokes equation} \label{NSE higher order estimate}
	We begin by rewriting ~\eqref{NSE} in the \textit{non-conservative form}, and applying the Leray projector (see Remark ~\ref{gradient terms in NSE}) to get
	\begin{equation}  \label{NSE'}
		\Leray\left(\rho\partial_t u + \rho u\cdot\nabla u - \nu \Delta u + \alpha\rho u\right) = \Leray\left(-2\lambda \Im(\nabla \Bar{\psi}B\psi) - 2\lambda u\Re(\Bar{\psi}B\psi) \right). \tag{c-NSE'}
	\end{equation} 
	Here, $\Leray$ is the Leray projector, which projects a Hilbert space into its divergence-free subspace, thus removing any purely gradient terms. Next, we multiply ~\eqref{NSE'} by $\partial_t u$ and integrate over the domain. This leads to
	\begin{equation} \label{NSE higher order first step}
		\begin{aligned}
			\int_{\T^3}\rho\abs{\partial_t u}^2 + \frac{\nu}{2}\frac{d}{dt}\norm{\nabla u}_{L^2_x}^2 &= -\int_{\T^3}\rho u\cdot\nabla u\cdot\partial_t u - 2\lambda\int_{\T^3}\partial_t u\cdot\Im(\nabla\Bar{\psi}B\psi) \\ 
			&\qquad - 2\lambda\int_{\T^3}\partial_t u\cdot u\Re(\Bar{\psi}B\psi) -\alpha\int_{\T^3}\rho u\cdot\partial_t u \\
			&=: I_8 + I_9 + I_{10} + I_{11}.
		\end{aligned}
	\end{equation} 
	Henceforth, we repeatedly use the fact that the density is bounded both above and below ($\mf\le \rho \le \Mf=\Mi+\mi-\mf$) to control the RHS. In particular, $\norm{u}_{L^2_x}$ and $\norm{\partial_t u}_{L^2_x}$ are equivalent to $\norm{\sqrt{\rho}u}_{L^2_x}$ and $\norm{\sqrt{\rho}\partial_t u}_{L^2_x}$, respectively. Thus, for the first term,
	\begin{align*}
		I_8 &\le \frac{1}{8}\norm{\sqrt{\rho}\partial_t u}_{L^2_x}^2 + C \int_{\T^3}\abs{u}^2\abs{\nabla u}^2 \le \frac{1}{8}\norm{\sqrt{\rho}\partial_t u}_{L^2_x}^2 + C\norm{u}_{L^6_x}^2 \norm{\nabla u}_{L^3_x}^2 \\
		&\le \frac{1}{8}\norm{\sqrt{\rho}\partial_t u}_{L^2_x}^2 + C_{\kappa}\norm{u}_{H^1_x}^4 \norm{\nabla u}_{L^2_x}^2 + \kappa\norm{\Delta u}_{L^2_x}^2.
	\end{align*}
	In going from the second line to the third, we use the Sobolev embeddings and Lebesgue interpolation. Finally, Young's inequality lets us extract the required dissipative term. For the second integral of ~\eqref{NSE higher order first step}, we have
	\begin{align*}
		I_9 &\le \frac{1}{8}\norm{\sqrt{\rho}\partial_t u}_{L^2_x}^2 + C\norm{\nabla\psi}_{L^6_x}^2\norm{B\psi}_{L^3_x}^2 \\
		&\le \frac{1}{8}\norm{\sqrt{\rho}\partial_t u}_{L^2_x}^2 + C\norm{\Delta\psi}_{L^2_x}^2\norm{B\psi}_{L^2_x}\norm{B\psi}_{H^1_x} \\
		&\le \frac{1}{8}\norm{\sqrt{\rho}\partial_t u}_{L^2_x}^2 + C_{\kappa}\norm{B\psi}_{L^2_x}^2 \left( \norm{\Delta\psi}_{L^2_x}^4 + \norm{\Delta\psi}_{L^2_x}^2 \right) + \kappa \norm{\nabla(B\psi)}_{L^2_x}^2
	\end{align*}  
	where the $B\psi$ term is handled via interpolation and Young's inequality, while the term $\nabla\psi$ is bounded using Sobolev embedding. For the third integral,
	\begin{align*}
		I_{10} &\le \frac{1}{8}\norm{\sqrt{\rho}\partial_t u}_{L^2_x}^2 + C\norm{u}_{L^6_x}^2\norm{\psi}_{L^{\infty}_x}^2\norm{B\psi}_{L^3_x}^2 \\
		&\le \frac{1}{8}\norm{\sqrt{\rho}\partial_t u}_{L^2_x}^2 + C_{\kappa}\norm{B\psi}_{L^2_x}^2 \left( \norm{u}_{H^1_x}^2\norm{\psi}_{H^2_x}^2 + \norm{u}_{H^1_x}^4\norm{\psi}_{H^2_x}^4 \right) + \kappa\norm{\nabla(B\psi)}_{L^2_x}^2
	\end{align*} 
	where the $B\psi$ term is handled just like in $I_9$. Finally, for the last term, we integrate by parts and use ~\eqref{continuity}, which results in
	\begin{equation} \label{HE9 3 terms}
			I_{11} 
			= -\frac{\alpha}{2} \frac{d}{dt} \norm{ \sqrt{\rho} u}_{L^2_x}^2 + \frac{\alpha}{2} \int_{\T^3} \rho u\cdot \nabla \abs{u}^2 + \alpha\lambda\int_{\T^3} \Re(\overline{\psi}B\psi) \abs{u}^2.
	\end{equation}
	We estimate the second term in ~\eqref{HE9 3 terms} via interpolation and a Sobolev embedding. This gives
	\begin{align*}
		\frac{\alpha}{2} \int_{\T^3} \rho u\cdot \nabla \abs{u}^2 &\lesssim \norm{u}_{L^2_x} \norm{u}_{L^3_x} \norm{\nabla u}_{L^6_x} \\
		&\le C_{\kappa} \norm{u}_{L^2_x}^3 \norm{u}_{L^6_x} + \kappa\norm{\Delta u}_{L^2_x}^2 \le C_{\kappa}\norm{u}_{H^1_x}^4 + \kappa\norm{\Delta u}_{L^2_x}^2.
	\end{align*}
	Similarly, for the third term in ~\eqref{HE9 3 terms},
	\begin{align*}
		\alpha\lambda\int_{\T^3} \Re(\overline{\psi}B\psi) \abs{u}^2 &\lesssim \norm{\psi}_{L^6_x} \norm{u}_{L^6_x}^2 \norm{B\psi}_{L^2_x} \le C_{\kappa} \norm{\psi}_{H^1_x}^2 \norm{u}_{H^1_x}^4 + \kappa\norm{B\psi}_{L^2_x}^2.
	\end{align*}
	Putting together the above estimates into ~\eqref{NSE higher order first step}, we end up with
	\begin{equation} \label{NSE higher order second step}
		\begin{aligned}
			&\nu\frac{d}{dt}\norm{\nabla u}_{L^2_x}^2 + \norm{\sqrt{\rho}\partial_t u}_{L^2_x}^2 + \alpha \frac{d}{dt} \norm{ \sqrt{\rho} u}_{L^2_x}^2 \\ 
			&\le
			C_{\kappa} \left( \norm{u}_{H^1_x}^4 \norm{\nabla u}_{L^2_x}^2 +  \norm{u}_{H^1_x}^4 + \norm{\psi}_{H^1_x}^2 \norm{u}_{H^1_x}^4 \right) \\ 
			&\quad + C_{\kappa}\norm{B\psi}_{L^2_x}^2 \left(\norm{\Delta\psi}_{L^2_x}^2 + \norm{u}_{H^1_x}^2\norm{\psi}_{H^2_x}^2 + \norm{\Delta\psi}_{L^2_x}^4 + \norm{u}_{H^1_x}^4\norm{\psi}_{H^2_x}^4 \right) \\ 
			&\quad + \kappa\norm{B\psi}_{L^2_x}^2 + \kappa\norm{\nabla(B\psi)}_{L^2_x}^2 + \kappa\norm{\Delta u}_{L^2_x}^2
		\end{aligned}
	\end{equation}
	where $C_{\kappa}$ depends on $\kappa$ as well as the system parameters.
	
	In order to obtain the higher-order velocity dissipation $\norm{\Delta u}_{L^2_x}^2$, we multiply ~\eqref{NSE'} by $-\theta\Delta u$, with $\theta>0$ to be fixed shortly, and integrate over the domain. This gives
	\begin{equation} \label{NSE higher order third step}
		\begin{aligned}
			\theta\nu\norm{\Delta u}_{L^2_x}^2 &= \theta\int_{\T^3}\rho\partial_t u\cdot\Delta u + \theta\int_{\T^3} \left(\rho u\cdot\nabla u \right)\cdot\Delta u + 2\lambda\theta\int_{\T^3}\Im(\nabla\Bar{\psi}B\psi)\cdot\Delta u \\ 
			&\qquad + 2\lambda\theta\int_{\T^3}u\Re(\Bar{\psi}B\psi)\cdot\Delta u + \alpha\theta\int_{\T^3} \rho u\cdot \Delta u \\
			&=: I_{12} + I_{13} + I_{14} + I_{15} + I_{16}.
		\end{aligned}
	\end{equation}
	For the first term, we have
	\begin{equation*}
		I_{12} \le \frac{\theta\nu}{10}\norm{\Delta u}_{L^2_x}^2 + C\theta\norm{\sqrt{\rho}\partial_t u}_{L^2_x}^2.
	\end{equation*} 
	The second integral is manipulated just as $I_8$, namely,
	\begin{align*}
		I_{13} \le \frac{\theta\nu}{20}\norm{\Delta u}_{L^2_x}^2 + C_{\theta}\int_{\T^3}\abs{u}^2\abs{\nabla u}^2 \le \frac{\theta\nu}{10}\norm{\Delta u}_{L^2_x}^2 + C_{\theta} \norm{u}_{H^1_x}^4 \norm{\nabla u}_{L^2_x}^2.
	\end{align*} 
	The integral $I_{14}$ requires Sobolev embedding, the Poincar\'e inequality, and Lebesgue norm interpolation, and reads
	\begin{align*}
		I_{14} &\le \frac{\theta\nu}{10}\norm{\Delta u}_{L^2_x}^2 + C_{\theta}\norm{\nabla\psi}_{L^6_x}^2\norm{B\psi}_{L^3_x}^2 \\
		&\le \frac{\theta\nu}{10}\norm{\Delta u}_{L^2_x}^2 + C_{\theta}\norm{\Delta\psi}_{L^2_x}^2\norm{B\psi}_{L^2_x} \norm{B\psi}_{H^1_x} \\
		&\le \frac{\theta\nu}{10}\norm{\Delta u}_{L^2_x}^2 + C_{\kappa,\theta}\norm{B\psi}_{L^2_x}^2 \left( \norm{\Delta\psi}_{L^2_x}^2 + \norm{\Delta\psi}_{L^2_x}^4 \right) + \kappa\norm{\nabla(B\psi)}_{L^2_x}^2.
	\end{align*} 
	In a similar manner, we have
	\begin{align*}
		I_{15} &\le \frac{\theta\nu}{10}\norm{\Delta u}_{L^2_x}^2 + C_{\theta}\norm{u}_{L^6_x}^2\norm{\psi}_{L^{\infty}_x}^2\norm{B\psi}_{L^3_x}^2 \\
		&\le \frac{\theta\nu}{10}\norm{\Delta u}_{L^2_x}^2 + C_{\theta}\norm{u}_{H^1_x}^2\norm{\psi}_{H^2_x}^2\norm{B\psi}_{L^2_x}\norm{B\psi}_{H^1_x} \\
		&\le \frac{\theta\nu}{10}\norm{\Delta u}_{L^2_x}^2 + C_{\kappa,\theta}\norm{B\psi}_{L^2_x}^2 \left( \norm{u}_{H^1_x}^2\norm{\psi}_{H^2_x}^2 + \norm{u}_{H^1_x}^4\norm{\psi}_{H^2_x}^4 \right) + \kappa\norm{\nabla(B\psi)}_{L^2_x}^2.
	\end{align*} 
	The last integral in ~\eqref{NSE higher order third step} requires, like $I_{12}$, only H\"older's and Young's inequalities. This provides
	\begin{equation*}
		I_{16} \le \frac{\theta\nu}{10}\norm{\Delta u}_{L^2_x}^2 + C\theta\norm{\sqrt{\rho}u}_{L^2_x}^2.
	\end{equation*}
	In the end, ~\eqref{NSE higher order third step} becomes
	\begin{equation} \label{NSE higher order fourth step}
		\begin{aligned}
			\frac{\theta\nu}{2}\norm{\Delta u}_{L^2_x}^2 &\le C\theta\norm{\sqrt{\rho}\partial_t u}_{L^2_x}^2 + C\theta\norm{\sqrt{\rho}u}_{L^2_x}^2 +  C_{\theta}\norm{u}_{H^1_x}^4 \norm{\nabla u}_{L^2_x}^2 \\ 
			&\quad + C_{\kappa,\theta} \norm{B\psi}_{L^2_x}^2 \left( \norm{\Delta\psi}_{L^2_x}^2 + \norm{u}_{H^1_x}^2\norm{\psi}_{H^2_x}^2 + \norm{\Delta\psi}_{L^2_x}^4 + \norm{u}_{H^1_x}^4 \norm{\psi}_{H^2_x}^4 \right) \\
			&\quad + \kappa\norm{\nabla(B\psi)}_{L^2_x}^2 .
		\end{aligned}
	\end{equation} 
	
	We now add ~\eqref{schrodinger equation higher order fourth step}, ~\eqref{NSE higher order second step} and ~\eqref{NSE higher order fourth step}. Then, we note from the definition of $B\psi$ that
	\begin{equation*}
		\norm{\nabla(B\psi)}_{L^2_x}^2 \lesssim \norm{D^3 \psi}_{L^2_x}^2 + \norm{\nabla(\abs{u}^2 \psi)}_{L^2_x}^2 + \norm{\nabla(u\cdot\nabla\psi)}_{L^2_x}^2 + \norm{\nabla(\abs{\psi}^p \psi)}_{L^2_x}^2 , 
	\end{equation*}
	where the last three terms on the RHS are exactly $I_5$, $I_6$, and $I_7$. Using sufficiently small values for $\theta$ and $\kappa$, we also absorb $\norm{\sqrt{\rho}\partial_t u}_{L^2_x}^2$ and $\norm{\Delta u}_{L^2_x}^2$ on the RHS into the LHS. Finally, we are left with
	\begin{equation} \label{delta psi nabla u equations combined}
		\begin{aligned}
			&\frac{d}{dt}\left[ \norm{\Delta\psi}_{L^2_x}^2 + \nu\norm{\nabla u}_{L^2_x}^2 + \alpha\norm{\sqrt{\rho}u}_{L^2_x}^2 \right] + \frac{1}{C}\norm{D^3\psi}_{L^2_x}^2 + \frac{1}{C}\norm{\sqrt{\rho}\partial_t u}_{L^2_x}^2 + \frac{1}{C}\norm{\Delta u}_{L^2_x}^2 \\
			&\le C\Bigg( \left(1 + \norm{u}_{H^1_x}^4 \right) \norm{\nabla u}_{L^2_x}^2 \norm{\psi}_{H^2_x}^4 + \norm{u}_{H^1_x}^4 \norm{\Delta \psi}_{L^2_x}^2 + \left(1 + \norm{\psi}_{H^1_x}^2 \right) \norm{u}_{H^1_x}^4 \Bigg) \\ 
			&\quad + C \Big(\norm{\psi}_{L^2_x}^{2p} \norm{\Delta\psi}_{L^2_x}^2 + \norm{\Delta\psi}_{L^2_x}^{2p+2} +  \norm{u}_{H^1_x}^4 \norm{\nabla u}_{L^2_x}^2 \Big) \\
			&\quad + C \norm{B\psi}_{L^2_x}^2 \left( \norm{\Delta\psi}_{L^2_x}^2 + \norm{u}_{H^1_x}^2 \norm{\psi}_{H^2_x}^2 + \norm{\Delta\psi}_{L^2_x}^4 + \norm{u}_{H^1_x}^4 \norm{\psi}_{H^2_x}^4 \right) \\
			&\quad  + C\theta\norm{\sqrt{\rho}u}_{L^2_x}^2 + \kappa\norm{B\psi}_{L^2_x}^2 .
		\end{aligned}
	\end{equation}
	This is the higher-order energy estimate. Using similar arguments to Section 3.3.3 of ~\cite{Jang2023Small-dataSuperfluidity}, we can use the Gr\"onwall inequality to control the higher-order energy and dissipation. The results are summarized in the following lemma.
	\begin{lem}[Algebraic decay rate for energies] \label{lem:energy + higher order energy estimates}
		We label the higher-order energy as
        \begin{equation} \label{eq:defining X}
            X := \norm{\Delta\psi(t)}_{L^2_x}^2 + \nu\norm{\nabla u(t)}_{L^2_x}^2 .
        \end{equation} 
        Then, the sum $Z:=X+E$ decays as 
        \begin{equation} \label{Z solution bound}
			Z(t) \le Z_0 e^{-\frac{t}{C}} + \frac{CS_0^{\frac{p}{2}+1}}{\left( 1+S_0^{\frac{p}{2}}t \right)^{1+\frac{2}{p}}} \lesssim Z_0 + S_0^{\frac{p}{2}+1} ,
		\end{equation}
        where $Z_0 := Z(0)$. Moreover, the time-integral of the corresponding dissipation terms is also bounded. Specifically,
        \begin{equation} \label{Y dissipative estimates}
			\begin{aligned} 
				&\norm{D^3 \psi}_{L^2_{[0,T]} L^2_x}^2 + \norm{\sqrt{\rho}\partial_t u}_{L^2_{[0,T]} L^2_x}^2 + \norm{\Delta u}_{L^2_{[0,T]} L^2_x}^2 + \norm{\nabla u}_{L^2_{[0,T]} L^2_x}^2 + \norm{\sqrt{\rho}u}_{L^2_{[0,T]} L^2_x}^2 + \norm{B\psi}_{L^2_{[0,T]}
					L^2_x}^2 \\ 
				&\lesssim Z_0 + S_0^p(Z_0 + S_0) \lesssim Z_0 + S_0^{p+1} .
			\end{aligned}
		\end{equation}
        In addition, by integrating the higher-order energy estimate over $[t,2t]$ (for $t\ge 1$), we end up with a time-decaying estimate for the dissipation, given by
		\begin{equation} \label{improved Y dissipative estimates}
			\begin{aligned} 
				&\norm{D^3 \psi}_{L^2_{[t,2t]} L^2_x}^2 + \norm{\sqrt{\rho}\partial_t u}_{L^2_{[t,2t]} L^2_x}^2 + \norm{\Delta u}_{L^2_{[t,2t]} L^2_x}^2 + \norm{\nabla u}_{L^2_{[t,2t]} L^2_x}^2 + \norm{\sqrt{\rho}u}_{L^2_{[t,2t]} L^2_x}^2 + \norm{B\psi}_{L^2_{[t,2t]} L^2_x}^2 \\ 
				&\lesssim Z_0 e^{-\frac{t}{C}} + \frac{S_0^{\frac{p}{2}+1}}{\left(1+S_0^{\frac{p}{2}}t\right)^{\frac{2}{p}}}.
			\end{aligned}
		\end{equation}
	\end{lem}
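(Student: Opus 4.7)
The plan is to combine the energy estimate \eqref{energy inequation 3} with the higher-order estimate \eqref{delta psi nabla u equations combined} into a single differential inequality for $Z = X + E$. Adding the two yields on the LHS the time derivative $\frac{d}{dt}Z$ plus a full collection of dissipative quantities ($\norm{D^3\psi}_{L^2_x}^2$, $\norm{\sqrt{\rho}\partial_t u}_{L^2_x}^2$, $\norm{\Delta u}_{L^2_x}^2$, $\norm{D^2\psi}_{L^2_x}^2$, $\norm{\nabla u}_{L^2_x}^2$, $\norm{\sqrt{\rho}u}_{L^2_x}^2$, $\norm{\psi}_{L^{p+2}_x}^{p+2}$, $\norm{\psi}_{L^{2p+2}_x}^{2p+2}$), while the RHS consists of the nonlinear terms from Sections~\ref{energy estimate}--\ref{higher order estimate} together with the ``forcing'' term $C\norm{\psi}_{L^2_x}^{p+2}$ arising from the Poincar\'e step \eqref{poincare for potential energy}.

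Next, I would run a bootstrap/absorption argument. Every nonlinear RHS term has the schematic form (dissipative quantity)$\times$(positive power of $Z$); for instance, $\norm{u}_{H^1_x}^4\norm{\Delta\psi}_{L^2_x}^2 \lesssim Z^2\cdot\norm{\Delta\psi}_{L^2_x}^2$ and $\norm{\Delta\psi}_{L^2_x}^{2p+2} \lesssim Z^p\cdot\norm{\Delta\psi}_{L^2_x}^2$, while the $\norm{B\psi}_{L^2_x}^2$-weighted terms become (small)$\cdot\norm{B\psi}_{L^2_x}^2$, which is absorbed into $\norm{D^2\psi}_{L^2_x}^2$ (plus lower-order harmless pieces) after expanding $B\psi$. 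Starting from a bootstrap assumption $Z(t)\le 2Z_0$ on a maximal subinterval, choosing $\varepsilon_0$ in \eqref{small data condition statement} small enough allows all such nonlinear terms---along with the auxiliary parameters $\kappa,\theta$ introduced along the way---to be absorbed into the dissipative LHS. After absorption, the inequality reduces to
\begin{equation*}
    \frac{d}{dt}Z + \frac{1}{C}Z \;\le\; \frac{C S_0^{\frac{p}{2}+1}}{\left(1+S_0^{\frac{p}{2}}t\right)^{1+\frac{2}{p}}},
\end{equation*}
where the RHS follows from $\norm{\psi}_{L^2_x}^{p+2} = S(t)^{\frac{p+2}{2}}$ and Lemma~\ref{lem:superfluid mass estimate}. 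Solving this linear ODE with integrating factor $e^{t/C}$ produces \eqref{Z solution bound}: the homogeneous part contributes $Z_0 e^{-t/C}$, and the convolution of the exponential kernel with the algebraically decaying forcing is dominated by its endpoint $s=t$, retaining the bound $C S_0^{\frac{p}{2}+1}(1+S_0^{\frac{p}{2}}t)^{-(1+\frac{2}{p})}$. Since this stays below $2Z_0$ when $Z_0$ is sufficiently small, the bootstrap closes.

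Finally, the dissipation estimates \eqref{Y dissipative estimates} and \eqref{improved Y dissipative estimates} follow by time-integration of the reduced inequality itself, rather than of $Z$. For \eqref{Y dissipative estimates}, integrating over $[0,T]$ yields the total dissipation $\lesssim Z_0 + \int_0^\infty S_0^{\frac{p}{2}+1}(1+S_0^{\frac{p}{2}}s)^{-(1+\frac{2}{p})}\,ds \lesssim Z_0 + S_0^{p+1}$, where integrability is supplied precisely by $1+\frac{2}{p}>1$. For \eqref{improved Y dissipative estimates}, we integrate over $[t,2t]$ and use the pointwise bound \eqref{Z solution bound} at time $t$ as the ``initial'' value, producing the stated decaying estimate. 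The main obstacle I anticipate is tracking the smallness thresholds so that every nonlinear contribution---most delicately the $\norm{\Delta\psi}_{L^2_x}^{2p+2}$ term, whose absorption requires an amount of smallness depending on $p$---is swallowed by the dissipation without spoiling the sharp algebraic rate; this $p$-dependence is precisely why $\varepsilon_0$ in \eqref{small data condition statement} is allowed to depend on $p$.
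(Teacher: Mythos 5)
Your overall strategy --- adding \eqref{energy inequation 3} to \eqref{delta psi nabla u equations combined}, absorbing every nonlinear term as (dissipation)$\times$(small power of $Z$) under a bootstrap, and solving the resulting linear ODE with forcing $C\norm{\psi}_{L^2_x}^{p+2}\lesssim S_0^{\frac p2+1}(1+S_0^{\frac p2}t)^{-1-\frac2p}$ --- is exactly the route the paper takes (it defers the details to Section 3.3.3 of the 2D predecessor). Your treatment of \eqref{Z solution bound} is essentially correct: the ``endpoint domination'' of the convolution follows from
\begin{equation*}
    e^{-\frac{t-s}{C}}\left(\frac{1+S_0^{p/2}t}{1+S_0^{p/2}s}\right)^{1+\frac2p}\le e^{-\frac{t-s}{C}}\bigl(1+(t-s)\bigr)^{1+\frac2p}\le C_p\, e^{-\frac{t-s}{2C}} ,
\end{equation*}
and the bootstrap closes because $Z_0\gtrsim \norm{\psi_0}_{L^{p+2}_x}^{p+2}\gtrsim S_0^{\frac p2+1}$ by H\"older on the finite-measure torus.

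The gap is in the dissipation estimates. You integrate the reduced inequality, so the forcing contributes $\int_0^\infty F\,ds$; but $\int_0^\infty S_0^{\frac p2+1}(1+S_0^{\frac p2}s)^{-1-\frac2p}\,ds=\frac p2 S_0$, not $O(S_0^{p+1})$ as you assert (for $p=1$ these differ by a factor of $S_0^{-1}$). Your route therefore yields only $\lesssim Z_0+S_0$, strictly weaker than the claimed $Z_0+S_0^p(Z_0+S_0)$. To obtain the stated bound one must not feed $\norm{\psi}_{L^2_x}^{p+2}$ into the dissipation integral as an inhomogeneous term at all: the low-order dissipation ($\nabla u$, $\sqrt{\rho}u$, $B\psi$) is controlled directly by the exact energy identity \eqref{energy bound E0}, which carries no forcing; and in the higher-order inequality \eqref{delta psi nabla u equations combined} the only mass-type forcing enters as $\norm{\psi}_{L^2_x}^{2p}\norm{\Delta\psi}_{L^2_x}^2\le S_0^{p}\norm{D^2\psi}_{L^2_x}^2$, i.e.\ a uniformly small factor times a time-integrable dissipative quantity with $\int\norm{D^2\psi}_{L^2_x}^2\lesssim Z_0+S_0$, which is precisely what produces $S_0^p(Z_0+S_0)$. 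The same issue affects \eqref{improved Y dissipative estimates}: $\int_t^{2t}F\sim S_0(1+S_0^{p/2}t)^{-\frac2p}$, whose numerator is $S_0$ rather than the claimed $S_0^{\frac p2+1}$; the sharper rate again comes from using $Z(t)$ as the initial value for the forcing-free energy identity on $[t,2t]$ together with the product structure above for the higher-order terms. You should restructure the dissipation step accordingly rather than integrating the single reduced ODE.
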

    The last inequality in ~\eqref{Y dissipative estimates} is valid because $Z_0,S_0\le 1$.

	\subsection{Maximal parabolic regularity for $\psi$} \label{maximal parabolic regularity for psi}
	
	From the previous analysis, we have obtained $\psi\in L^2_{[0,T]}H^3_x$. However, as pointed out in the discussion following Definition ~\ref{existence time definition}, we seek $B\psi\in L^2_{[0,T]}L^{\infty}_x$, which follows from $\psi\in L^2_{[0,T]}H^{\frac{7}{2}+}_x$. In the 2D case \cite{Jang2023Small-dataSuperfluidity}, this was achieved by taking advantage of the Sobolev embedding $H^{1+\delta}_x \subset L^{\infty}_x$ and deriving a ``highest-order energy estimate'' for $\psi$. This approach does not work here due to the embedding $H^{\frac{3}{2}+\delta}_x \subset L^{\infty}_x$, which would require higher-order estimates on $u$ and $\rho$. Instead, we exploit the parabolic nature of ~\eqref{NLS} and apply the method of maximal regularity to gain the necessary control of $\psi$. 

    \begin{lem}[Maximal regularity for $\psi$] \label{lem:maximal regularity for psi}
        For $\delta$ as defined in Theorem ~\ref{global existence}, and a sufficiently small $\delta_1>0$, we have the maximal regularity bound 
        \begin{equation} \label{maximal regularity lemma for psi}           \norm{\partial_t\psi}_{L^{1+\delta}_{[0,T]} H^{\frac{3}{2}+\delta_1}_x} + \norm{\Delta\psi}_{L^{1+\delta}_{[0,T]} H^{\frac{3}{2}+\delta_1}_x} \le f\left(Z_0,S_0 \right) ,
        \end{equation}
        uniformly in time $T$, where $f\colon(\R_+)^2\rightarrow \R_+$ is a continuous polynomial, with $f(0,0)=0$.
    \end{lem}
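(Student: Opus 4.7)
The plan is to recognize~\eqref{NLS} as a parabolic equation with a complex-valued leading coefficient and then apply Lemma~\ref{maximal parabolic regularity}. Collecting the Laplacian terms in~\eqref{NLS} using the definition of $B$, one rewrites it as
\begin{equation*}
\partial_t \psi - \frac{\lambda+i}{2}\Delta \psi = -\frac{\lambda}{2}\abs{u}^2\psi - i\lambda\, u\cdot\nabla\psi - (\lambda+i)\mu\abs{\psi}^p\psi =: f(\psi,u).
\end{equation*}
Setting $\mathcal{A} := -\tfrac{\lambda+i}{2}\Delta$, its principal symbol $\Tilde{A}(\xi) = \tfrac{\lambda+i}{2}\abs{\xi}^2$ takes values in the sector $\Sigma_\zeta$ with $\zeta = \arctan(1/\lambda) < \pi/2$, and the three conditions of Definition~\ref{K,zeta ellipticity} are immediate, so $\mathcal{A}$ is uniformly $(K,\zeta)$-elliptic. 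I would pick $X = H^{3/2+\delta_1}(\T^3)$, so that $X_1 = D(\mathcal{A}) = H^{7/2+\delta_1}(\T^3)$, and $r = 1+\delta$.

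Next I would identify the interpolation space the initial data must lie in. By real interpolation between Bessel potential spaces,
\[
Y \;=\; (X,X_1)_{\delta/(1+\delta),\,1+\delta} \;=\; B^{3/2 + \delta_1 + 2\delta/(1+\delta)}_{2,\,1+\delta}.
\]
Since $\delta < 1/3$, one has $2\delta/(1+\delta) < 1/2$, so choosing $\delta_1 > 0$ small enough that $3/2 + \delta_1 + 2\delta/(1+\delta) < 2$ yields the embedding $H^2 \hookrightarrow Y$, and therefore $\norm{\psi_0}_Y \lesssim \norm{\psi_0}_{H^2}$. This is precisely the sense in which the target Besov space is ``marginally larger than $H^2$'' as mentioned in the introduction.

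The substantive work is to control $\norm{f}_{L^{1+\delta}_{[0,T]} X}$ uniformly in $T$ by a continuous polynomial of $Z_0, S_0$ vanishing at the origin. Because $3/2 + \delta_1 > 3/2$, the space $H^{3/2+\delta_1}(\T^3)$ is a Banach algebra and embeds into $L^\infty$, so Moser/Kato--Ponce type estimates give $\norm{\abs{u}^2\psi}_{H^{3/2+\delta_1}} \lesssim \norm{u}_{H^{3/2+\delta_1}}^2 \norm{\psi}_{H^{3/2+\delta_1}}$, $\norm{u\cdot\nabla\psi}_{H^{3/2+\delta_1}} \lesssim \norm{u}_{H^{3/2+\delta_1}} \norm{\psi}_{H^{5/2+\delta_1}}$, and $\norm{\abs{\psi}^p\psi}_{H^{3/2+\delta_1}} \lesssim \norm{\psi}_{L^\infty}^p \norm{\psi}_{H^{3/2+\delta_1}}$. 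Each factor is then placed in a suitable mixed Lebesgue space by spatial Gagliardo--Nirenberg interpolation between the energy-level bounds $u\in L^\infty_t H^1_x \cap L^2_t H^2_x$ and $\psi \in L^\infty_t H^2_x \cap L^2_t H^3_x$ coming from~\eqref{energy bound E0} and Lemma~\ref{lem:energy + higher order energy estimates}, after which H\"older in time distributes the exponents into $L^{1+\delta}_t$. To make the bound uniform in $T$, I would work on dyadic slabs $[2^k, 2^{k+1}]$ and invoke the time-decaying dissipative estimate~\eqref{improved Y dissipative estimates} together with Lemma~\ref{lem:superfluid mass estimate}, whose combined algebraic rates $(1+t)^{-2/p}$ produce geometric decay of each slab's contribution and hence a summable total.

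Feeding these bounds into Lemma~\ref{maximal parabolic regularity} then yields~\eqref{maximal regularity lemma for psi}, since $\norm{\Delta\psi}_{L^{1+\delta}_t X} \lesssim \norm{\psi}_{L^{1+\delta}_t X_1}$ for $\mathcal{A}$ uniformly elliptic. The main obstacle lies in the third paragraph: simultaneously balancing the spatial regularity (forced to exceed $3/2$ by the algebra/embedding requirements) against the low time integrability $L^{1+\delta}_t$, while preserving uniformity in $T$. This is especially delicate for the nonlinearity $\abs{\psi}^p\psi$ when $p$ is large and the superfluid mass decays slowly, and it is here that the upper bound $\delta < 1/(p-1)$ must be invoked to close the time integrals.
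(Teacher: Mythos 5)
Your proposal follows essentially the same route as the paper: the same parabolic rewriting of \eqref{NLS} with operator $-\frac{\lambda+i}{2}\Delta$, the same $(K,\zeta)$-ellipticity check, maximal regularity at the level $H^{3/2+\delta_1}\to H^{7/2+\delta_1}$ (the paper realizes this by first applying $(-\Delta)^{3/4+\delta_1/2}$ and taking $X=L^2$, $X_1=H^2$, which is equivalent to your choice of $X$), the same Besov interpolation space for $\psi_0$ with the same $\delta<1/3$ bookkeeping, and the same product-estimate-plus-decay strategy for the forcing (the paper integrates the global algebraic decay of $Z(t)$ directly rather than summing dyadic slabs, but both yield uniformity in $T$). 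One small misattribution: the restriction $\delta<1/(p-1)$ is not needed to close the $\abs{\psi}^p\psi$ term in this lemma --- the decay $\norm{\psi}_{H^2_x}^2\lesssim (1+S_0^{p/2}t)^{-(1+2/p)}$ already makes $\int_0^\infty\norm{\psi}_{H^2_x}^{(p+1)(1+\delta)}\,dt$ converge for every $p\ge 1$ --- it is invoked only later, when bounding $I_{21}$ in the positivity-of-density argument.
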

    
    \begin{proof}
    We begin by rewriting ~\eqref{NLS} in a parabolic form as
    \begin{equation} \label{NLS parabolic}
        \partial_t \psi - \frac{\lambda+i}{2} \Delta\psi = -\frac{\lambda}{2} \abs{u}^2\psi - i\lambda u\cdot\nabla\psi - \mu(\lambda+i) \abs{\psi}^p \psi.
    \end{equation}
    The differential operator in this case is $A = - \frac{\lambda+i}{2} \Delta$. Comparing this to ~\eqref{differential operator}, we see that $m=1$, and $a_{\alpha} = -\frac{\lambda+i}{2}$ when $\alpha \in \{(2,0,0),(0,2,0),(0,0,2)\}$ and $a_{\alpha} = 0$ otherwise. Thus, the first condition in Definition ~\ref{K,zeta ellipticity} is satisfied. The principal symbol of the operator is
    \begin{equation*}
        \Tilde{A}(\xi) = \frac{\lambda+i}{2}\abs{\xi}^2 ,
    \end{equation*}
    from which it is clear that $\Tilde{A}(\xi)^{-1}$ is also bounded for $\abs{\xi}=1$. Finally, the spectrum $\sigma\left( \frac{\lambda+i}{2}\abs{\xi}^2 \right)$ belongs to the sector $\Sigma_{\zeta_0}$ for $\tan{\zeta_0} > \lambda^{-1} > 0$. Thus, the operator $A$ is uniformly $(K,\zeta_0)$-elliptic for $K = \max\{ \frac{\sqrt{1+\lambda^2}}{2}, \frac{2}{\sqrt{1+\lambda^2}} \}$, and the maximal parabolic regularity estimate is applicable. 

    We now act upon ~\eqref{NLS parabolic} by $(-\Delta)^{\frac{3}{4}+\frac{\delta_1}{2}}$ for some $\delta_1>0$ that will be determined shortly. We then apply Lemma ~\ref{maximal parabolic regularity} with $X=L^2(\T^3)$ and $X_1=H^2(\T^3)$, which results in
    \begin{equation} \label{applying maximal parabolic reg}
    \begin{aligned}
        &\norm{\partial_t(-\Delta)^{\frac{3}{4}+\frac{\delta_1}{2}}\psi}_{L^r_t L^2_x} + \norm{(-\Delta)^{\frac{3}{4}+\frac{\delta_1}{2}}\psi}_{L^r_t H^2_x} \\ 
        &\lesssim \norm{(-\Delta)^{\frac{3}{4}+\frac{\delta_1}{2}}\psi_0}_{(L^2_x,H^2_x)_{1-\frac{1}{r},r}} + \norm{(-\Delta)^{\frac{3}{4}+\frac{\delta_1}{2}}(\abs{u}^2 \psi)}_{L^r_t L^2_x} + \norm{(-\Delta)^{\frac{3}{4}+\frac{\delta_1}{2}}(u\cdot\nabla\psi)}_{L^r_t L^2_x} \\ 
        &\quad + \norm{(-\Delta)^{\frac{3}{4}+\frac{\delta_1}{2}}(\abs{\psi}^p \psi)}_{L^r_t L^2_x} \\
        &:= I_{17} + I_{18} + I_{19} + I_{20} ,
    \end{aligned}
    \end{equation}
	for $r=1+\delta$ with $\delta\in (0,\frac{1}{3})$, and this restriction will become clear when estimating $I_{18}$ in~\eqref{I18 estimate}. It is important to note that $L^r_t$ is calculated on $[0,T]$, where $T>0$ is the local existence time defined in~\eqref{existence time definition}. We now estimate each of the terms on the RHS. The norm of the initial condition is found to belong to a Besov space as a result of the interpolation (see ~\cite[Chapter 7]{Adams2003SobolevSpaces}). Indeed, we have
    \begin{equation} \label{I17 estimate}
    \begin{aligned}
        I_{17} &= \norm{(-\Delta)^{\frac{3}{4}+\frac{\delta_1}{2}}\psi_0}_{(L^2_x,H^2_x)_{1-\frac{1}{r},r}} = \norm{(-\Delta)^{\frac{3}{4}+\frac{\delta_1}{2}}\psi_0}_{(L^2_x,H^2_x)_{\frac{\delta}{1+\delta},1+\delta}} = \norm{(-\Delta)^{\frac{3}{4}+\frac{\delta_1}{2}}\psi_0}_{B^{\frac{2\delta}{1+\delta}}_{2,1+\delta}} \\ 
        &\lesssim \norm{(-\Delta)^{\frac{3}{4}+\frac{\delta_1}{2}}\psi_0}_{H^{\frac{2\delta}{1+\delta}+\delta_2}} \le \norm{\psi_0}_{H^{\frac{3}{2}+\frac{2\delta}{1+\delta}+\delta_1+\delta_2}} \le \norm{\psi_0}_{H^2} ,
    \end{aligned}
    \end{equation}
    for sufficiently small values for $\delta,\delta_1$, and $\delta_2$. The first inequality is due to the embedding $H^{s+\delta_2} \subset B^{s}_{2,q}$ for any $\delta_2>0$ (see ~\cite[Lemma 2.2]{Lu2021SharpSpaces}). Due to the restriction $\delta<\frac{1}{3}$, we have $\frac{2\delta}{1+\delta}<\frac{1}{2}$, implying that we may bound $I_{17}$ with the initial data, as in the last inequality of~\eqref{I17 estimate}. Next, we deal with the second term on the RHS of~\eqref{applying maximal parabolic reg} as
    \begin{equation} \label{I18 estimate}
    \begin{aligned}
        I_{18} &\lesssim \norm{\abs{u}^2 \psi}_{L^{1+\delta}_t H^2_x} \lesssim \norm{\norm{u}_{L^{\infty}_x} \norm{u}_{H^2_x} \norm{\psi}_{L^{\infty}_x}}_{L^{1+\delta}_t} + \norm{\norm{u}_{L^{\infty}_x}^2 \norm{\psi}_{H^2_x}}_{L^{1+\delta}_t} \\
        &\lesssim \norm{\norm{u}_{H^1_x}^{\frac{1}{2}} \norm{u}_{H^2_x}^{\frac{3}{2}} \norm{\psi}_{H^2_x}}_{L^{1+\delta}_t} + \norm{\norm{u}_{H^1_x} \norm{u}_{H^2_x} \norm{\psi}_{H^2_x}}_{L^{1+\delta}_t} \\
        &\lesssim \norm{u}_{L^{\frac{2(1+\delta)}{1-3\delta}}_t H^1_x}^{\frac{1}{2}} \norm{u}_{L^2_t H^2_x}^{\frac{3}{2}} \norm{\psi}_{L^{\infty}_t H^2_x} + \norm{u}_{L^{\frac{2(1+\delta)}{1-\delta}}_t H^1_x} \norm{u}_{L^2_t H^2_x} \norm{\psi}_{L^{\infty}_t H^2_x} \\
        &\lesssim \left(Z_0 + S_0^{1+\frac{2p\delta}{1+\delta}}\right)^{\frac{1}{4}} \left(Z_0 + S_0^{p+1}\right)^{\frac{3}{4}} \left(Z_0 + S_0^{\frac{p}{2}+1}\right)^{\frac{1}{2}} \\ 
        &\quad + \left( Z_0 + S_0^{1+\frac{p\delta}{1+\delta}} \right)^{\frac{1}{2}} \left(Z_0 + S_0^{p+1}\right)^{\frac{1}{2}} \left(Z_0 + S_0^{\frac{p}{2}+1}\right)^{\frac{1}{2}} .
    \end{aligned}
    \end{equation}
    The second inequality follows from the product rule for Sobolev norms~\cite[Lemma 3.4]{Majda2002VorticityFlow}, and the third inequality from Agmon's inequality and Sobolev embedding. The fourth inequality is due to H\"older's inequality, while the final step follows from ~\eqref{Z solution bound} and ~\eqref{Y dissipative estimates}. In a similar way, we can analyze the third term on the RHS of ~\eqref{applying maximal parabolic reg}, yielding
    \begin{equation} \label{I19 estimate}
        \begin{aligned}
            I_{19} &\lesssim \norm{u\cdot\nabla\psi}_{L^{1+\delta}_t H^2_x} \\
            &\lesssim \norm{\norm{u}_{L^{\infty}_x} \norm{\nabla\psi}_{H^2_x}}_{L^{1+\delta}_t} + \norm{\norm{u}_{H^2_x} \norm{\nabla\psi}_{L^{\infty}_x}}_{L^{1+\delta}_t} \\
            &\lesssim \norm{u}_{L^{\frac{2(1+\delta)}{1-3\delta}}_t H^1_x}^{\frac{1}{2}} \norm{u}_{L^2_t H^2_x}^{\frac{1}{2}} \norm{D^3\psi}_{L^2_t L^2_x} + \norm{u}_{L^2_t H^2_x} \norm{D^2\psi}_{L^{\frac{2(1+\delta)}{1-3\delta}}_t L^2_x}^{\frac{1}{2}} \norm{D^3\psi}_{L^2_t L^2_x}^{\frac{1}{2}} \\
            &\lesssim \left(Z_0 + S_0^{1+\frac{2p\delta}{1+\delta}}\right)^{\frac{1}{4}} \left(Z_0 + S_0^{p+1}\right)^{\frac{3}{4}} .
        \end{aligned}
    \end{equation}
    Since we have $p\ge 1$, the last term on the RHS of ~\eqref{applying maximal parabolic reg} is bounded using ~\eqref{superfluid mass bound} and ~\eqref{Z solution bound} as
    \begin{equation} \label{I20 estimate}
        \begin{aligned}
            I_{20} &\lesssim \norm{\abs{\psi}^p \psi}_{L^{1+\delta}_t H^2_x} \lesssim \norm{\norm{\psi}_{H^2_x}^{p+1}}_{L^{1+\delta}_t} \\
            &\lesssim \norm{\left(\norm{\psi}_{L^2_x}+\norm{\Delta\psi}_{L^2_x}\right)^{p+1}}_{L^{1+\delta}_t} \lesssim S_0^{\frac{\delta p}{2(1+\delta)}+\frac{1}{2}} + Z_0^{\frac{p+1}{2}} + S_0^{\frac{p^2}{4} + \frac{p(1+3\delta)}{4(1+\delta)} + \frac{1}{2}} .
        \end{aligned}
    \end{equation}
    Putting together the estimates in ~\eqref{I17 estimate}--\eqref{I20 estimate} gives us the desired result.
    \end{proof}

    We conclude that $\partial_t\psi, \Delta\psi \in L^{1+\delta}_{[0,T]} H^{\frac{3}{2}+\delta_1}_x$, uniformly in $T$, and that their norms can be made small by an appropriate choice of $S_0$ and $Z_0$.
	
	\subsection{Ensuring global-in-time positive density} \label{ensuring positive density}
	We have now obtained all the a priori estimates needed to return to ~\eqref{constraint to choose existence time}. 
    
    \noindent \textit{Proof of Theorem ~\ref{global existence}}: Using ~\eqref{coupling}, we have
	\begin{equation} \label{split Bpsi L^infty}
    \begin{aligned}
		\norm{B\psi}_{L^{\infty}_x} &\lesssim \norm{\Delta\psi}_{L^{\infty}_x} + \norm{\abs{u}^2\psi}_{L^{\infty}_x} + \norm{u\cdot\nabla\psi}_{L^{\infty}_x} + \norm{\abs{\psi}^p \psi}_{L^{\infty}_x} \\
        &\lesssim  \norm{D^2\psi}_{H^{\frac{3}{2}+\delta_1}_x} + \norm{u}_{H^1_x} \norm{u}_{H^2_x} \norm{\psi}_{H^2_x} + \norm{u}_{H^1_x}^{\frac{1}{2}} \norm{u}_{H^2_x}^{\frac{1}{2}} \norm{D^3\psi}_{L^2_x} + \norm{\psi}_{H^2_x}^{p+1} ,
	\end{aligned}
    \end{equation}
	where the second step is a consequence of Agmon's inequality and Sobolev embedding. We now substitute ~\eqref{split Bpsi L^infty} into the LHS of ~\eqref{constraint to choose existence time} and also use the Sobolev embedding $\norm{\psi}_{L^{\infty}_x} \lesssim \norm{\psi}_{H^2_x}$. This leads to
    \begin{equation} \label{upper bound for constraint equation LHS}
        \begin{aligned}
            \int_0^T \norm{\psi}_{L^{\infty}_x}\norm{B\psi}_{L^{\infty}_x} 
            &\lesssim \int_0^T \norm{\psi}_{H^2_x} \norm{D^2\psi}_{H^{\frac{3}{2}+\delta_1}_x} + \int_0^T \norm{u}_{H^1_x} \norm{u}_{H^2_x} \norm{\psi}_{H^2_x}^2 \\ 
            &\quad + \int_0^T \norm{\psi}_{H^2_x} \norm{u}_{H^1_x}^{\frac{1}{2}} \norm{u}_{H^2_x}^{\frac{1}{2}} \norm{D^3\psi}_{L^2_x} + \int_0^T \norm{\psi}_{H^2_x}^{p+2} \\
            &:= I_{21} + I_{22} + I_{23} + I_{24} .
        \end{aligned}
    \end{equation}
    We now show that each of these four terms can be made as small as required, by choosing sufficiently small values of the data, i.e., $S_0$ and $Z_0$. For the first term $(I_{21})$, we use H\"older's inequality,~\eqref{superfluid mass bound}, and ~\eqref{Z solution bound} to write
    \begin{equation} \label{I21 estimate p>=4}
        \begin{aligned}
            I_{21} &= \int_0^T \norm{\psi}_{H^2_x} \norm{D^2\psi}_{H^{\frac{3}{2}+\delta_1}_x} \le \norm{\psi}_{L^{1+\frac{1}{\delta}}_{[0,T]}H^2_x} \norm{D^2\psi}_{L^{1+\delta}_{[0,T]} H^{\frac{3}{2}+\delta_1}_x} \\
            &\lesssim \left(S_0^{\frac{1}{2}\left(1-\frac{p\delta}{1+\delta}\right)} + Z_0^{\frac{1}{2}} \right) \norm{D^2\psi}_{L^{1+\delta}_{[0,T]} H^{\frac{3}{2}+\delta_1}_x} .
        \end{aligned}
    \end{equation}
    The above calculation assumes that $p<1+\frac{1}{\delta}$ (which is consistent with the definition of $\delta$ in Theorem ~\ref{global existence}), so that a uniform-in-$T$ bound can be obtained. From Lemma ~\ref{lem:maximal regularity for psi}, we conclude the smallness (in terms of the initial data) of the last factor in the RHS, i.e., $\norm{D^2\psi}_{L^{1+\delta}_{[0,T]} H^{\frac{3}{2}+\delta_1}_x}$. Thus, $I_{21}$ is independent of $T$, and can be made sufficiently small by appropriate initial data.
    
    Moving on to the remaining terms in ~\eqref{upper bound for constraint equation LHS}, we have
    \begin{equation} \label{I22 estimate}
        I_{22} \lesssim \norm{u}_{L^2_t H^1_x} \norm{u}_{L^2_t H^2_x} \norm{\psi}_{L^{\infty}_t H^2_x}^2 .
    \end{equation}
    All the terms are, once again, bounded in terms of the data according to ~\eqref{Z solution bound} and ~\eqref{Y dissipative estimates}. In the same way, we have
    \begin{equation} \label{I23 estimate}
        I_{23} \lesssim \norm{\psi}_{L^{\infty}_t H^2_x} \norm{u}_{L^2_t H^1_x}^{\frac{1}{2}} \norm{u}_{L^2_t H^2_x}^{\frac{1}{2}} \norm{D^3\psi}_{L^2_t L^2_x} ,
    \end{equation}
    where all the terms are controlled by the data. Finally, using ~\eqref{superfluid mass bound} and ~\eqref{Z solution bound}, we arrive at
    \begin{equation}  \label{I24 estimate}
        \begin{aligned}
            I_{24} &\lesssim \int_0^T \left( \frac{S_0^{\frac{p}{2}+1}}{\left(1+S_0^{\frac{p}{2}}t \right)^{1+\frac{2}{p}}} +  Z_0^{\frac{p}{2}+1}e^{-\frac{t}{C}} + \frac{S_0^{\frac{(p+2)^2}{4}}}{\left(1+S_0^{\frac{p}{2}}t \right)^{\frac{(p+2)^2}{2p}}} \right) dt \\
            &\lesssim S_0 + Z_0^{\frac{p}{2}+1} + S_0^{\frac{(p+1)^2+3}{4}} ,
        \end{aligned}
    \end{equation}
    and this calculation holds for all values of $p\ge 1$. From the analysis in ~\eqref{I21 estimate p>=4}--\eqref{I24 estimate}, we conclude that the LHS of ~\eqref{upper bound for constraint equation LHS} can be made sufficiently small to satisfy the constraint given by ~\eqref{constraint to choose existence time}, for all values of time $T>0$. This implies that the density always remains bounded below, and thus solutions are global in time for $p<1+\frac{1}{\delta}$. No matter how large (but finite) $p$ is, it is possible to choose $\delta$ small enough so that the constraint in~\eqref{constraint to choose existence time} is met.
    \qed

	\addtocontents{toc}{\protect\setcounter{tocdepth}{0}}
	
    \section*{Acknowledgments}
    J.J. was supported by the NSF grants DMS-2009458 and DMS-2306910, while I.K. was supported by the NSF grant DMS-2205493. The authors appreciate the comments of the anonymous referees, which helped to improve the manuscript.
	
	\section*{Conflict of interest declaration}
	On behalf of all the authors, the corresponding author states that there is no conflict of interest.

    \section*{Data availability statement}
    Data sharing is not applicable to this article as no datasets were generated or analyzed during the current study.
	
	\addtocontents{toc}{\protect\setcounter{tocdepth}{2}}
	
	\bibliographystyle{alpha}
	\bibliography{references}

\newcommand{\etalchar}[1]{$^{#1}$}
\begin{thebibliography}{BGLVV22}

\bibitem[AF78]{Adams1978RealRn}
R.A. Adams and J.J.F. Fournier.
\newblock {Real Interpolation of Sobolev Spaces on Subdomains of Rn}.
\newblock {\em Can. J. Math.}, 30(01):190--214, 2 1978.

\bibitem[AF03]{Adams2003SobolevSpaces}
R.A. Adams and J.J.F. Fournier.
\newblock {\em {Sobolev Spaces}}.
\newblock Elsevier Science, Netherlands, second edition, 2003.

\bibitem[AM09]{Antonelli2009OnDynamics}
P.~Antonelli and P.~Marcati.
\newblock {On the Finite Energy Weak Solutions to a System in Quantum Fluid
  Dynamics}.
\newblock {\em Commun. Math. Phys.}, 287:657--686, 2009.

\bibitem[AM12]{Antonelli2012TheDimensions}
P.~Antonelli and P.~Marcati.
\newblock {The Quantum Hydrodynamics System in Two Space Dimensions}.
\newblock {\em Arch. Ration. Mech. Anal.}, 203(2):499--527, 2012.

\bibitem[AM15]{Antonelli2015FiniteSuperfluidity}
P.~Antonelli and P.~Marcati.
\newblock {Finite energy global solutions to a two-fluid model arising in
  superfluidity}.
\newblock {\em Bull. Inst. Math. Acad.}, 10(3):349--373, 2015.

\bibitem[AMS12]{Antonelli2012OnRotation}
P.~Antonelli, D.~Marahrens, and C.~Sparber.
\newblock {On the cauchy problem for nonlinear schr{\"{o}}dinger equations with
  rotation}.
\newblock {\em Discrete Continuous Dyn. Syst.}, 32(3):709--715, 3 2012.

\bibitem[AMZ21]{Antonelli2021GenuineStability}
P.~Antonelli, P.~Marcati, and H.~Zheng.
\newblock {Genuine hydrodynamic analysis to the 1-D QHD system: existence,
  dispersion and stability}.
\newblock {\em Commun. Math. Phys.}, 383(3):2113--2161, 2021.

\bibitem[AMZ23]{Antonelli2023AnSystems}
P.~Antonelli, P.~Marcati, and H.~Zheng.
\newblock {An intrinsically hydrodynamic approach to multidimensional QHD
  systems}.
\newblock {\em Arch. Ration. Mech. Anal.}, 247(2):1--58, 2023.

\bibitem[AS17]{Antonelli2017GlobalEquations}
P.~Antonelli and S.~Spirito.
\newblock {Global Existence of Finite Energy Weak Solutions of Quantum
  Navier–Stokes Equations}.
\newblock {\em Arch. Ration. Mech. Anal.}, 225:1161--1199, 9 2017.

\bibitem[AS22]{Antonelli2022GlobalEquations}
P.~Antonelli and S.~Spirito.
\newblock {Global existence of weak solutions to the Navier-Stokes-Korteweg
  equations}.
\newblock {\em Ann. Inst. H. Poincar{\'{e}} Anal. Non Lin{\'{e}}aire},
  39:171--200, 2 2022.

\bibitem[BBP14]{Berloff2014ModelingTemperatures}
N.G. Berloff, M.~Brachet, and N.P. Proukakis.
\newblock {Modeling quantum fluid dynamics at nonzero temperatures}.
\newblock {\em Proc. Natl. Acad. Sci. U.S.A.}, 111(1):4675--4682, 2014.

\bibitem[BD04]{Bresch2004QuelquesKorteweg}
D.~Bresch and B.~Desjardins.
\newblock {Quelques mod{\`{e}}les diffusifs capillaires de type Korteweg}.
\newblock {\em C. R. - Mec.}, 332:881--886, 2004.

\bibitem[BDV01]{Barenghi2001QuantizedTurbulence}
C.F. Barenghi, R.J. Donnelly, and W.F. Vinen.
\newblock {\em {Quantized Vortex Dynamics and Superfluid Turbulence}}.
\newblock Springer-Verlag, Berlin Heidelberg, first edition, 2001.

\bibitem[BGLVV22]{Bresch2022OnTerm}
D.~Bresch, M.~Gisclon, I.~Lacroix-Violet, and A.~Vasseur.
\newblock {On the Exponential Decay for Compressible Navier–Stokes–Korteweg
  Equations with a Drag Term}.
\newblock {\em J. Math. Fluid Mech.}, 24(11), 2 2022.

\bibitem[BSG{\etalchar{+}}18]{Bland2018QuantumTurbulence}
T.~Bland, G.W. Stagg, L.~Galantucci, A.W. Baggaley, and N.G. Parker.
\newblock {Quantum Ferrofluid Turbulence}.
\newblock {\em Phys. Rev. Lett.}, 121:174501, 2018.

\bibitem[BSZ{\etalchar{+}}23]{Brachet2023CouplingFlows}
M.~Brachet, G.~Sadaka, Z.~Zhang, V.~Kalt, and I.~Danaila.
\newblock {Coupling Navier-Stokes and Gross-Pitaevskii equations for the
  numerical simulation of two-fluid quantum flows}.
\newblock {\em J. Comput. Phys.}, 488:112193, 9 2023.

\bibitem[Car96]{Carlson1996AVortices}
N.N. Carlson.
\newblock {A topological defect model of superfluid vortices}.
\newblock {\em Phys. D: Nonlinear Phenom.}, 98:183--200, 1996.

\bibitem[CDS12]{Carles2012MadelungKorteweg}
R.~Carles, R.~Danchin, and J.-C. Saut.
\newblock {Madelung, Gross-Pitaevskii and Korteweg}.
\newblock {\em Nonlinearity}, 25(10):2843--2873, 2012.

\bibitem[Cha22]{Chauleur2022GlobalEquations}
Q.~Chauleur.
\newblock {Global dissipative solutions of the defocusing isothermal
  Euler-Langevin-Korteweg equations}.
\newblock {\em Asymptotic Analysis}, 126(3-4):255--283, 2022.

\bibitem[CK03]{Choe2003StrongFluids}
H.J. Choe and H.~Kim.
\newblock {Strong solutions of the Navier-Stokes equations for nonhomogeneous
  incompressible fluids}.
\newblock {\em Commun. Partial Differ. Equ.}, 28(5-6):1183--1201, 2003.

\bibitem[CMS08]{Carles2008OnGases}
R.~Carles, P.A. Markowich, and C.~Sparber.
\newblock {On the Gross-Pitaevskii equation for trapped dipolar quantum gases}.
\newblock {\em Nonlinearity}, 21(11):2569--2590, 11 2008.

\bibitem[Dan03]{Danchin2003Density-dependentSpaces}
R.~Danchin.
\newblock {Density-dependent incompressible viscous fluids in critical spaces}.
\newblock {\em Proc. R. Soc. Edinb.}, 133A:1311--1334, 2003.

\bibitem[Dan06]{Danchin2006TheFluids}
R.~Danchin.
\newblock {The inviscid limit for density-dependent incompressible fluids}.
\newblock {\em Ann. Fac. Sci. Toulouse}, XV(4):637--688, 2006.

\bibitem[DL89]{DiPerna1989OrdinarySpaces}
R.~J. DiPerna and P.-L. Lions.
\newblock {Ordinary differential equations, transport theory and Sobolev
  spaces}.
\newblock {\em Invent. Math.}, 98:511--547, 1989.

\bibitem[HL94]{Hattori1994SolutionsType}
H.~Hattori and D.~Li.
\newblock {Solutions for two-dimensional system for materials of Korteweg
  type}.
\newblock {\em SIAM J. Math. Anal.}, 25(1):85--98, 1994.

\bibitem[HL96]{Hattori1996GlobalMaterials}
H.~Hattori and D.~Li.
\newblock {Global solutions of a high-dimensional system for Korteweg
  materials}.
\newblock {\em J. Math. Anal. Appl.}, 198:84--97, 1996.

\bibitem[Hol01]{Holm2001}
D.D. Holm.
\newblock {Introduction to HVBK Dynamics}.
\newblock In C~F Barenghi, R~J Donnelly, and W~F Vinen, editors, {\em Quantized
  Vortex Dynamics and Superfluid Turbulence}, pages 114--130. Springer Berlin
  Heidelberg, Berlin, Heidelberg, 2001.

\bibitem[Jay22]{Jayanti2022AnalysisSuperfluidity}
P.C. Jayanti.
\newblock {\em {Analysis of models of superfluidity}}.
\newblock PhD thesis, University of Maryland College Park, 2022.

\bibitem[JJK23]{Jang2023Small-dataSuperfluidity}
J.~Jang, P.C. Jayanti, and I.~Kukavica.
\newblock {Small-data global existence of solutions for the Pitaevskii model of
  superfluidity}.
\newblock {\em arXiv:2305.12496}, 2023.

\bibitem[JL04]{Jungel2004QuantumDecay}
A.~J{\"{u}}ngel and H.~Li.
\newblock {Quantum Euler-Poisson systems: Global existence and exponential
  decay}.
\newblock {\em Q. Appl. Math.}, 62(3):569--600, 2004.

\bibitem[JMR02]{Jungel2002LocalEquations}
A.~J{\"{u}}ngel, M.C. Mariano, and D.~Rial.
\newblock {Local existence of solutions to the transient quantum hydrodynamics
  equations}.
\newblock {\em Math. Models Methods Appl. Sci.}, 12(4):485--495, 2002.

\bibitem[JT21]{Jayanti2021GlobalEquations}
P.C. Jayanti and K.~Trivisa.
\newblock {Global Regularity of the 2D HVBK equations}.
\newblock {\em J. Nonlinear Sci.}, 31(2), 2021.

\bibitem[JT22a]{Jayanti2022LocalSuperfluidity}
P.C. Jayanti and K.~Trivisa.
\newblock {Local existence of solutions to a
  Navier–Stokes-Nonlinear-Schr{\"{o}}dinger model of superfluidity}.
\newblock {\em J. Math. Fluid Mech.}, 24(46), 2022.

\bibitem[JT22b]{Jayanti2022UniquenessSuperfluidity}
P.C. Jayanti and K.~Trivisa.
\newblock {Uniqueness in a Navier-Stokes-nonlinear-Schr{\"{o}}dinger model of
  superfluidity}.
\newblock {\em Nonlinearity}, 35(7):3755--3776, 2022.

\bibitem[J{\"{u}}n10]{Jungel2010GlobalFluids}
A.~J{\"{u}}ngel.
\newblock {Global Weak Solutions to Compressible Navier–Stokes Equations for
  Quantum Fluids}.
\newblock {\em SIAM J. Math. Anal.}, 42(3):1025--1045, 2010.

\bibitem[Kaz74]{Kazhikov1974Fluid}
A.V. Kazhikov.
\newblock { Solvability of the initial and boundary value problem for the
  equations of motion of an inhomogeneous viscous incompressible fluid}.
\newblock {\em Sov. Phys. Dokl.}, 19(6):331--332, 1974.

\bibitem[Kha69]{Khalatnikov1969AbsorptionPoint}
I.M. Khalatnikov.
\newblock {Absorption and dispersion of sound in a superfluid liquid near the
  lambda point}.
\newblock {\em Zh. Eksp. Teor. Fiz.}, 57:489--497, 1969.

\bibitem[Kim87]{Kim1987WeakDensity}
J.U. Kim.
\newblock {Weak solutions of an initial boundary value problem for an
  incompressible viscous fluid with non-negative density}.
\newblock {\em SIAM J. Math. Anal.}, 18(1):89--96, 1987.

\bibitem[Lio96]{Lions1996MathematicalMechanics}
P.-L. Lions.
\newblock {\em {Mathematical Topics in Fluid Mechanics}}, volume~1.
\newblock Oxford University Press, 1996.

\bibitem[LS78]{Ladyzhenskaya1978UniqueFluids}
O.A. Ladyzhenskaya and V.A. Solonnikov.
\newblock {Unique solvability of an initial- and boundary-value problem for
  viscous incompressible nonhomogeneous fluids}.
\newblock {\em J. Sov. Math.}, 9(5):697--749, 1978.

\bibitem[Lu21]{Lu2021SharpSpaces}
Y.~Lu.
\newblock {Sharp embedding between Besov-Triebel-Sobolev spaces and modulation
  spaces}.
\newblock {\em arXiv:2108.12106}, 2021.

\bibitem[MB02]{Majda2002VorticityFlow}
A.J. Majda and A.L. Bertozzi.
\newblock {\em {Vorticity and Incompressible Flow}}.
\newblock Cambridge University Press, Cambridge, UK, first edition, 2002.

\bibitem[Pit59]{Pitaevskii1959PhenomenologicalPoint}
L.P. Pitaevskii.
\newblock {Phenomenological theory of superfluidity near the Lambda point}.
\newblock {\em Sov. Phys. JETP}, 8(2):282--287, 1959.

\bibitem[PL11]{Paoletti2011QuantumTurbulence}
M.S. Paoletti and D.P. Lathrop.
\newblock {Quantum Turbulence}.
\newblock {\em Annu. Rev. Condens. Matter Phys.}, 2(1):213--234, 3 2011.

\bibitem[PS01]{Pruss2001SolvabilityTime}
J.~Pruss and R.~Schnaubelt.
\newblock {Solvability and maximal regularity of parabolic evolution equations
  with coefficients continuous in time}.
\newblock {\em J. Math. Anal. Appl.}, 256:405--430, 2001.

\bibitem[Sim90]{Simon1990NonhomogeneousPressure}
J.~Simon.
\newblock {Nonhomogeneous Viscous Incompressible Fluids: Existence of Velocity,
  Density, and Pressure}.
\newblock {\em SIAM J. Math. Anal.}, 21(5):1093--1117, 1990.

\bibitem[Soh11]{Sohinger2011BoundsEquations}
V.~Sohinger.
\newblock {\em {Bounds on the growth of high Sobolev norms of solutions to
  nonlinear Schr{\"{o}}dinger equations}}.
\newblock PhD thesis, Massachusetts Institute of Technology, 2011.

\bibitem[SYZ22]{Su2022ExponentialForce}
Y.~Su, L.~Yao, and M.~Zhu.
\newblock {Exponential decay for 2D reduced gravity two-and-a-half layer model
  with quantum potential and drag force}.
\newblock {\em Discrete Continuous Dyn. Syst. Ser. B.}, 27(12):7207--7226,
  2022.

\bibitem[Tao06]{Tao2006NonlinearAnalysis}
T.~Tao.
\newblock {\em {Nonlinear dispersive equations: local and global analysis}}.
\newblock American Mathematical Society, 2006.

\bibitem[Vin06]{Vinen2006AnTurbulence}
W.F. Vinen.
\newblock {An introduction to quantum turbulence}.
\newblock {\em J. Low Temp. Phys.}, 145(1-4):7--24, 11 2006.

\bibitem[WG21]{Wang2021AModel}
G.~Wang and B.~Guo.
\newblock {A new blow-up criterion of the strong solution to the quantum
  hydrodynamic model}.
\newblock {\em Appl. Math. Lett.}, 119:107045, 2021.

\end{thebibliography}
	
\end{document}